\newtheorem{thm}{Theorem}
\newtheorem{lem}{Lemma}
\newtheorem{definition}{Definition}
\theoremstyle{remark}
\newtheorem{remark}{Remark}
\def\a{\alpha}
\def\ve{\varepsilon}
\def\d{\partial}
\def\t{\tau}
\def\F{\mathcal{F}}
\def\f{\varphi}
\def\Z{\mathbb{Z}}
\def\R{\mathbb{R}}
\def\E{\mathbb{E}}
\def\P{\mathbb{P}}
\author{Oleksandr~\textsc{Misiats}\\
 Department of Mathematics, Purdue University\\
West Lafayette, IN, 47907, USA
\\{\tt omisiats@purdue.edu}
\and Oleksandr~\textsc{Stanzhytskyi}\\
Department of Mathematics, Kiev National University, Kiev, Ukraine
\\{\tt ostanzh@gmail.com}
\and Nung Kwan~\textsc{Yip}\\
 Department of Mathematics, Purdue University\\
West Lafayette, IN, 47907, USA
\\{\tt yip@math.purdue.edu}
}
\title{Existence and uniqueness of invariant measures for stochastic reaction-diffusion equations in unbounded domains}
\begin{document}
\maketitle

\begin{abstract}
In this paper we investigate
 the long-time behavior of stochastic reaction-diffusion equations of the type
$du = (Au + f(u))dt + \sigma(u) dW(t)$, where $A$ is an elliptic operator, $f$ and $\sigma$ are nonlinear maps and $W$ is an infinite dimensional nuclear Wiener process. The emphasis is on unbounded domains.
Under the assumption that the nonlinear function $f$ possesses certain
dissipative properties, this equation is known to have a solution with
an expectation value which is uniformly bounded in time.
Together with some compactness property, the existence of such a solution implies the existence of an invariant measure which is an important step in establishing the ergodic behavior of the underlying physical system. In this paper we expand the existing classes of nonlinear functions $f$ and $\sigma$ and elliptic operators $A$ for which the invariant measure exists, in particular, in unbounded domains.
We also show the uniqueness of the invariant measure for an equation defined on the upper half space if $A$ is the
Shr\"{o}dinger-type operator $A = \frac{1}{\rho}(\text{div} \rho \nabla u)$ where $\rho  = e^{-|x|^2}$ is the Gaussian weight.

\end{abstract}

\section{Introduction and Main Results}
We study the long time behavior of the equation
\begin{equation}\label{eqn1}
\begin{cases}
\frac{\d}{\d t} u(t,x) = A u(t,x) + f(x,u(t,x)) + \sigma(x,u(t,x)) \dot{W}(t,x), t > 0, x \in G;\\
u(0,x) = u_0(x).
\end{cases}
\end{equation}
Here $G \subset \R^d$ is a (possibly unbounded) domain, $A$ is an elliptic operator, $f$ and $\sigma$ are measurable real functions, and the Gaussian noise $\dot{W}(t,x)$ is white in time and colored in space. In particular, we are interested in the existence and uniqueness of invariant measures for the equation (\ref{eqn1}).

Equations of this type model the behavior of various dynamical systems in physics and mathematical biology. For instance, this equation describes the
well known Hodgkin-Huxley model in neurophysiology (where $u$ is the electric potential on nerve cells \cite{HodHux}), as well as the Dawson and Fleming model of population genetics \cite{DawFle} ($u(t, \cdot)$ is the mass distribution of population). Besides, the equation (\ref{eqn1}) with infinite  dimensional noise is an interesting object from the mathematical point of view since its analysis involves subtle interplay between PDE and probabilistic techniques.

Reaction-diffusion equations of type (\ref{eqn1}) have been extensively studied by a variety of authors. The analysis of the long-time behavior of solutions of (\ref{eqn1}) is a nontrivial question even in the deterministic case $\sigma(x,u) \equiv 0$. This question was addressed, for example by
N. Dirr and N. Yip \cite{DirYip} and references therein. In their work, the authors describe a certain class of nonlinearities $f(x,u)$, for which the deterministic equation (\ref{eqn1}) admits a bounded solution (as $t \to \infty$), while for a different class of nonlinearities all solutions of the deterministic equation (\ref{eqn1}) have linear growth in $t$ (and hence are not uniformly bounded). The transition between those two classes of nonlinearities is also studied in the paper.

A comprehensive study of stochastic equation (\ref{eqn1}) has been performed by G. Da Prato and J. Zabczyk (\cite{DapZab92, DapZab96} and references therein). The ergodic properties of the solutions of (\ref{eqn1}) is a question of separate interest in these works. This question was addressed from the point of view of the existence of an invariant measure for (\ref{eqn1}), which is a key step in the study of the ergodic behavior of the underlying physical systems \cite[Theorems 3.2.4, 3.2.6]{DapZab96}.
Based on the pioneering work of Krylov and Bogoliubov \cite{KryBog},
the authors suggested the following approach to establish the existence of invariant measures:
\begin{itemize}
\item Establish the compactness and Feller property of the semigroup
generated by $A$;

\item Establish the existence of a solution which is bounded for  $t \in [0, \infty)$ in certain probability sense.
\end{itemize}
The existence of invariant measures using the aforementioned procedure
was established in \cite{MasSie99, GolMas01, BrzGat99}, in particular, in the case when $A = \Delta$ and $G$ is a bounded domain.

A different approach to the existence of invariant measures, based on the coupling method, was used by Bogachev and Roechner \cite{BogRoe01} and
C. Mueller \cite{Mue93}. This method can be applied even for space-white noise but only in the case when the space dimension $d$ is one.

The existence and uniqueness of the solutions of stochastic
reaction-diffusion equations in bounded domains with Dirichlet boundary
condition, as well as the existence of an invariant measure was studied by S. Cerrai in \cite{Cer03, Cer01, CerExtr} and references therein.

The question of the existence of invariant measures in {\em unbounded domains} with $A = \Delta$ was studied in \cite{DapZab96, Hairer, TesZab98, AssMan01}.
The key condition for the existence of a solution bounded in probability,
and hence the existence of an invariant measure in these works is the
following dissipation condition for the nonlinearity $f$: for some $k>0$,
\begin{equation}\label{diss}
\begin{cases}
f(u) \geq -k u - c, u \leq 0;\\
f(u) \leq -k u + c, u \geq 0.
\end{cases}
\end{equation}

To the best of our knowledge, the only case the existence of an invariant
measure in $\R^d$ is proved when $f(u)$ does not satisfy the dissipativity condition (\ref{diss})
is the work of Assing and Manthey \cite{AssMan01}.
For spatial dimensions three or higher, these authors show the existence
of an invariant measure for (\ref{eqn1}) if $f(u) \equiv 0$
and $\sigma(u)$ is a  Lipschitz function of $u$ with a sufficiently small Lipschitz constant.
One of the goals of the present work is to extend the results of
\cite{AssMan01} to incorporate $f$ which might not satisfy the condition
\eqref{diss}.

We establish two types of existence results for invariant measures
in unbounded domains. The first is to make use
of the boundedness and compactness property of the solution.
The dissipativity required comes not
from the nonlinear function $f$ but from the decaying property of the Green's function in three and higher dimensions in $\R^d$. The second is to
make use of the exponential stability of the equation.
This approach also gives the uniqueness of the invariant measure.
Both strategies are similar to
\cite{daPratoGatarekZabczyk, daPratoZabczykBdErgodic}
while the analytical framework is different.

Before describing our results, we introduce some weighted $L^2$-space.
Let $\rho$ be a non-negative continuous $L^1(\R^d) \bigcap L^\infty(\R^d)$ function.
Following \cite{TesZab98}, we call $\rho$ to be an {\it admissible} weight if for every $T>0$ there exists $C(T)>0$ such that
\[
G(t, \cdot) * \rho \leq C(T) \rho, \ \forall t \in [0,T], \,\text{where}\,\,G(t,x) = \frac{1}{(4 \pi t)^{d/2}}e^{ - \frac{|x|^2}{4 t}}.
\]
Some examples of admissible weights include $\rho(x) = exp(-\gamma |x|)$ for $\gamma>0$, and $\rho(x) = (1+|x|^n)^{-1}$ for $n>d$.

For an admissible weight $\rho$, define
\begin{equation}\label{weight_L2def}
H = L^2_{\rho}(\R^d):=\{w: \R^d \to \R, \int_{\R^d} |w(x)|^2 \rho(x)\, dx < \infty\}
\end{equation}
and
\[
\|w\|^2_{H}:= \int_{\R^d} |w(x)|^2 \rho(x)\, dx.
\]
The choice of $\rho$ is more flexible for the first part while it is
quite specific for the second. The noise process $W$ is defined and
constructed at the beginning of
Sections \ref{SecInvMeasEntire} and \ref{SecInvMeasUpperHalf}.

Our first set of results is stated as follows.
\begin{thm}\label{thm1}
Let $d\geq 3$. Assume
\begin{enumerate}
\item $\sigma:\R^d \times \R \to \R$ satisfies $|\sigma(x,u_1) - \sigma(x,u_2)| \leq c|u_1 - u_2|$ and $|\sigma(x,u)| \leq \sigma_0$ for some $\sigma_0 > 0$.
\item $f:\R^d \times \R \to \R$ satisfies $|f(x,u_1) - f(x,u_2)| \leq c|u_1 - u_2|$ and there exists $\f(x) \in L^1(\R^d) \cap L^{\infty}(\R^d)$ such that
\begin{equation}\label{bound}
|f(x,u)|\leq \f(x), \forall (x,u) \in \R^d \times \R.
\end{equation}
\end{enumerate}
Let $u(t,x)$ be a solution of (\ref{eqn1}) with $\E\|u(0,x)\|^2_{L^2(\R^d)}<\infty$,
then we have
\[
\sup_{t \geq 0} \E\|u(t,x)\|^2_{H} < \infty.
\]
\end{thm}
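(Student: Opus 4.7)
The plan is to express $u(t)$ in its mild (variation-of-constants) form
$$u(t) = S(t)u_0 + \int_0^t S(t-s)f(\cdot,u(s))\,ds + \int_0^t S(t-s)\sigma(\cdot,u(s))\,dW(s) =: I_0(t)+I_1(t)+I_2(t),$$
where $S(t)$ denotes the heat semigroup generated by $A$ with Gaussian kernel $G(t,\cdot)$, and then to bound $\E\|I_j(t)\|_H^2$ uniformly in $t$ for $j=0,1,2$. Three structural facts drive all three estimates: the admissibility of $\rho$ (which controls small times), the hypercontractive decay $\|S(t)\|_{L^2\to L^\infty}\leq Ct^{-d/4}$, valid for $d\geq 3$ (which controls large times and makes certain tail integrals convergent), and the fact that $|f(\cdot,u)|\leq\varphi\in L^1\cap L^\infty$ and $|\sigma(\cdot,u)|\leq\sigma_0$ are pointwise deterministic bounds independent of $u$, so the nonlinear feedback is effectively shut off.

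For $I_0$, Jensen's inequality gives $(G(t)*u_0)^2\leq G(t)*u_0^2$, hence $\|I_0(t)\|_H^2\leq\int u_0^2(x)(G(t)*\rho)(x)\,dx\leq C(T)\|u_0\|_H^2$ on $t\leq T$ by admissibility, while for $t>T$ the Young inequality $\|S(t)u_0\|_\infty\leq Ct^{-d/4}\|u_0\|_{L^2}$ combined with $\rho\in L^1$ gives decay like $t^{-d/2}$; taking expectation and using $\E\|u_0\|_{L^2}^2<\infty$ furnishes the required uniform bound. For $I_1$, the deterministic pointwise bound $|f(\cdot,u(s))|\leq\varphi$ yields $|S(t-s)f(u(s))|(x)\leq(G(t-s)*\varphi)(x)$, and since $\|G(s)*\varphi\|_\infty\leq C\min(\|\varphi\|_\infty,s^{-d/2}\|\varphi\|_1)$ together with $\rho\in L^1$ gives $\|G(s)*\varphi\|_H\leq C\min(1,s^{-d/2})$, Minkowski's integral inequality produces
$$\|I_1(t)\|_H\leq\int_0^t C\min\bigl(1,r^{-d/2}\bigr)\,dr,$$
which is finite uniformly in $t$ precisely because $d\geq 3$ makes $r^{-d/2}$ integrable at infinity. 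This is the place where the absence of a dissipativity condition \eqref{diss} on $f$ is compensated by the decay of the Green's function.

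For the stochastic convolution $I_2$, expand the nuclear Wiener process as $W(t)=\sum_k\sqrt{\lambda_k}\beta_k(t)e_k$ with $\{\beta_k\}$ independent standard Brownians, $\{e_k\}$ an orthonormal system, and $\sum_k\lambda_k<\infty$. Interpreting $\sigma(u(s))$ as a multiplication operator, Itô's isometry in the Hilbert space $H$ gives
$$\E\|I_2(t)\|_H^2=\sum_k\lambda_k\int_0^t\E\bigl\|S(t-s)\bigl(\sigma(u(s))e_k\bigr)\bigr\|_H^2\,ds.$$
Because $|\sigma(\cdot,u(s))e_k|\leq\sigma_0|e_k|$ pointwise, admissibility bounds the inner norm by $C\sigma_0^2\|e_k\|_H^2$ on $t-s\leq 1$, while the $L^2\to L^\infty$ decay bounds it by $C(t-s)^{-d/2}\sigma_0^2\|e_k\|_{L^2}^2\|\rho\|_1$ on $t-s>1$; the time integral is uniformly finite by $d\geq 3$, and summing over $k$ converges thanks to the trace-class structure. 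The main obstacle will be this last step: Itô's isometry has to be applied in the weighted space $H$ with the correct Hilbert--Schmidt interpretation, and the mode-by-mode bound must be summable against $\{\lambda_k\}$. Once that is set up, the theorem follows by combining the three uniform estimates.
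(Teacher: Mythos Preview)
Your proposal is correct and follows the same architecture as the paper: write the mild solution as a sum of the propagated initial datum, the deterministic convolution, and the stochastic convolution, then bound each uniformly in $t$ using the short-time/long-time splitting and the integrability of $r^{-d/2}$ at infinity for $d\geq 3$.

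A few technical differences are worth noting. For the initial term the paper is more economical than you are: it simply uses $\|\rho\|_\infty<\infty$ together with the unweighted $L^2$-contraction $\|S(t)u_0\|_{L^2}\leq\|u_0\|_{L^2}$, so neither admissibility nor the $L^2\to L^\infty$ smoothing is needed there. For the stochastic convolution the paper does \emph{not} invoke admissibility either; instead it exploits the standing hypothesis $\sup_k\|e_k\|_{L^\infty}\leq 1$ on the basis (see \eqref{basis}), which lets it bound the near-time piece by $\int_{\R^d}G(t-s,x-y)\,dy=1$ and the far-time piece by Cauchy--Schwarz and $\int G^2(t-s,\cdot)\lesssim(t-s)^{-d/2}$. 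Your route via admissibility and $\|e_k\|_H^2\leq\|\rho\|_\infty$ is a legitimate alternative that avoids the uniformly bounded basis assumption, at the cost of appealing to the admissibility inequality $G(t)\ast\rho\leq C(T)\rho$; the paper's proof in fact never uses admissibility and relies only on $\rho\in L^1\cap L^\infty$.
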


To state our second result, for simplicity, we write $f(x,u(x))$ as $f$
which maps $L^2$ to $L^2$.
\begin{thm}\label{thm2}
Let $d\geq 3$. Assume
\begin{itemize}
\item[(i)] $\forall u,v \in L^2(\R^d)$, $\|f(u)-f(v)\|_{L^2(\R^d)} \leq C \|u-v\|_{L^2(\R^d)}$ and $\|\sigma(\cdot,u(\cdot)) - \sigma(\cdot,v(\cdot))\|_{L^2(\R^d)} \leq C \|u-v\|_{L^2(\R^d)}$;

\item[(ii)]
For some $N > 0$, $f(u) = 0$ if $\|u\|_{L^2(\R^d)} \geq N$.

\item[(iii)] There exists $\psi(x) \in L^2(\R^d)$ such that
\begin{equation}\label{boundsigma}
|\sigma(x,u)|\leq \psi(x), \forall (x,u) \in \R^d \times \R.
\end{equation}
\end{itemize}
Let $u(t,x)$ be a solution of (\ref{eqn1}) with $u(0,x) = u_0(x)\in L^2(\R^d)$. Then
\begin{equation}\label{mainRth2}
\sup_{t \geq 0} \E\|u(t,x)\|^2_{L^2(\R^d)} < \infty.
\end{equation}
\end{thm}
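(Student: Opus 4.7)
I would apply It\^o's formula to $V(u) = \|u\|_{L^2(\R^d)}^2$, show that the resulting drift satisfies a Foster--Lyapunov inequality $\mathcal{L}V(u)\le -\alpha V(u)+\beta$, and conclude by Gr\"onwall. For the solution $u(t)$ of \eqref{eqn1}, It\^o gives
\[
d\|u(t)\|_{L^2}^2 = \bigl[2\langle u, Au\rangle_{L^2} + 2\langle u, f(u)\rangle_{L^2} + \|\sigma(u)Q^{1/2}\|_{HS}^2\bigr]\,dt + 2\langle u, \sigma(u)\,dW\rangle_{L^2},
\]
so after taking expectation the martingale piece drops out and three contributions remain.

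\textbf{Estimating the nonlinearity and the It\^o correction.} Assumption (ii) is decisive here: since $f$ is $L^2$-Lipschitz with constant $C$ and $f(u)\equiv 0$ whenever $\|u\|_{L^2}\ge N$, comparing $f(u)$ with $f(v)=0$ for $v:=Nu/\|u\|_{L^2}$ (and trivially for $u=0$) yields $\|f(u)\|_{L^2}\le CN$ for $\|u\|_{L^2}\le N$, and $f(u)=0$ otherwise. By Cauchy--Schwarz,
\[
\bigl|\langle u, f(u)\rangle_{L^2}\bigr|\le \|u\|_{L^2}\|f(u)\|_{L^2}\le CN^2,
\]
a uniform, $u$-independent bound. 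For the diffusion, assumption (iii) gives $|\sigma(x,u)|\le \psi(x)\in L^2(\R^d)$ pointwise, hence $\|\sigma(u)Q^{1/2}\|_{HS}^2$ is controlled by a constant $K_\sigma$ depending only on $\psi$ and the trace of the covariance of $W$. Crucially, both bounds are independent of $\E\|u\|_{L^2}^2$, which is what makes the Foster--Lyapunov mechanism feasible.

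\textbf{Closing via dissipativity, and the main obstacle.} Combining with the dissipativity of $A$ in the form $\langle u, Au\rangle_{L^2}\le -\lambda\|u\|_{L^2}^2$ (the exponential stability of the underlying semigroup advertised in the introduction as the basis of the second approach), one obtains
\[
\frac{d}{dt}\E\|u(t)\|_{L^2}^2 \le -2\lambda\,\E\|u(t)\|_{L^2}^2 + 2CN^2 + K_\sigma,
\]
and Gr\"onwall delivers $\E\|u(t)\|_{L^2}^2\le e^{-2\lambda t}\E\|u_0\|_{L^2}^2 + (2CN^2 + K_\sigma)/(2\lambda)$, which is exactly \eqref{mainRth2}. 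I expect the main obstacle to lie precisely in extracting such a spectral gap for $A$: on all of $\R^d$ a pure Laplacian is only non-positive, $\langle u,\Delta u\rangle = -\|\nabla u\|_{L^2}^2$, with no Poincar\'e constant available, so the raw energy identity would give only linear growth of $\E\|u\|_{L^2}^2$. Closing this gap will require either assuming (or building into the definition of $A$) that its self-adjoint part has $L^2$-spectrum bounded away from zero, or a more delicate stopping-time argument that exploits the fact that in the ``outer'' region $\{\|u\|_{L^2}\ge N\}$ the $f$-drift vanishes and the equation reduces to a linear SDE with $L^2$-bounded diffusion, while on the complementary ball $\{\|u\|_{L^2}\le N\}$ the norm is automatically controlled.
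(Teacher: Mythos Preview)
Your main line of attack cannot close. In this theorem $A=\Delta$ on all of $\R^d$, and as you yourself note, $\langle u,\Delta u\rangle_{L^2}=-\|\nabla u\|_{L^2}^2$ with no Poincar\'e inequality, so there is no $\lambda>0$ with $\langle u,Au\rangle\le -\lambda\|u\|^2$. The ``exponential stability advertised in the introduction'' that you invoke belongs to a \emph{different} operator (the weighted $A=\rho^{-1}\mathrm{div}(\rho\nabla)$ on the half-space, used for Theorem~\ref{thm5nonlin}), not to the Laplacian on $\R^d$. With only $\langle u,\Delta u\rangle\le 0$, your differential inequality degenerates to $\tfrac{d}{dt}\E\|u\|^2\le 2CN^2+K_\sigma$, which gives linear growth and does not prove \eqref{mainRth2}. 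So the It\^o/Foster--Lyapunov route, as written, has a genuine gap.

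The escape hatch you sketch in your last sentence is in fact the paper's proof, but you have not supplied the two ideas that make it work. First, the right random time is a \emph{last exit time}: for fixed $t$, set $M=\max\{\|u_0\|_{L^2},N\}$ and let $\tau$ be the last $s\le t$ with $\|u(s)\|_{L^2}=M+1$ (or $\tau=t$ if no such $s$). On the event $\{\|u(t)\|>M+1\}$ one has $\|u(s)\|>N$ for all $s\in(\tau,t]$, so $f(u(s))\equiv 0$ there and the mild formula collapses to $u(t)=S(t-\tau)u(\tau)+\int_\tau^t S(t-s)\sigma(u(s))\,dW(s)$; the $L^2$-contraction of $S$ handles the first piece. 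Second, and this is where the hypothesis $d\ge 3$ actually enters, one must bound $\E\bigl\|\int_\tau^t S(t-s)\sigma(u(s))\,dW(s)\bigr\|_{L^2}^2$ \emph{uniformly in $t$} despite $\tau$ being random. After Doob's inequality this reduces to controlling $\int_0^t\int_{\R^d}G^2(t-s,\cdot)\,dx\,ds$, which is done by splitting $\int_0^t=\int_0^{t-1}+\int_{t-1}^t$ and using $\int_{\R^d}G^2(\theta,x)\,dx\sim \theta^{-d/2}$ together with $|\sigma|\le\psi\in L^2$; the tail $\int_1^\infty \theta^{-d/2}\,d\theta$ converges precisely when $d\ge 3$. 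Neither of these mechanisms appears in your proposal, and without them the argument does not go through.
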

\begin{remark}
Note that (\ref{mainRth2}) implies $\sup_{t \geq 0} \E\|u(t,x)\|^2_{L^2_{\rho} (\R^d)} < \infty$ for any weight $\rho\in L^\infty(\R^d)$.
\end{remark}

\begin{remark}
For both of the above theorems, the Lipschitz conditions for $f$ and $\sigma$
are mainly used for the existence and uniqueness of the solutions while their
global bounds and contraints are for proving the uniform boundedness in time.
\end{remark}

\begin{remark}
Comparing with the results of \cite{AssMan01},
we do not require the smallness of the Lipschitz constants of $f$ and
$\sigma$. These are replaced by their somewhat more global conditions.
\end{remark}

Roughly speaking, in the case $d \geq 3$, the Laplace operator has sufficiently strong dissipative properties which compensate for the lack of dissipation coming from $f(u)$. These results, in conjunction with the compactness property of the semigroup for the Laplace operator in some weighted space defined on $\R^d$, yield the existence of an invariant measure for (\ref{eqn1}) using the Krylov-Bogoliubov
approach \cite[Theorem 6.1.2]{DapZab96}.

In the analysis of the ergodic behavior of dynamical systems, the
uniqueness of invariant measures is a key step.
As shown in \cite[Theorem 3.2.6]{DapZab96}, the uniqueness of the invariant measure implies that the solution process is ergodic. However, establishing the
uniqueness property of the invariant measure is highly nontrivial.
One approach, illustrated in \cite[Chapter 7]{DapZab96}, shows that
the uniqueness is a consequence of a strong Feller property and
irreducibility. Typically, in order to apply this result, one needs to impose rather restrictive conditions both on the diffusion coefficient and on the semigroup $\{S(t)\}_{t\geq 0}$ generated by the elliptic
operator. In particular, the diffusion operator has to be bounded and
non-degenerate, while the semigroup
has to be square integrable in some Hilbert-Schmidt norm
\cite[Hypothesis 7.1(iv)]{DapZab96}. However, this condition does not hold
for the Laplace operator in unbounded domains.

In the second part of our work, we use a different approach to establish the uniqueness of invariant measures which does not require
\cite[Hypothesis 7.1(iv)]{DapZab96}.
This approach, reminiscent of \cite[Theorem 6.3.2]{DapZab96}, is based on
the fact that if the semigroup has an exponential contraction property
\begin{equation}\label{contraction}
\|S(t) u\| \leq M e^{-\gamma t} \|u\|,
\end{equation}
for some $M,\,\,\gamma>0$, then the corresponding dynamical system
possesses a unique solution which is stable and uniformly bounded in
expectation.
This solution is utilized in the proof of the uniqueness of the invariant
measure.
The condition (\ref{contraction}) holds in particular if $A$ is the
Laplace operator $\Delta$ in a bounded domain $G$ with Dirichlet boundary
condition. Our result however, deals with an example when $G$ is
unbounded.

Consider
\begin{equation}\label{eqn1semi}
\begin{cases}
\frac{\d}{\d t} u(t,x) = A u(t,x) + f(x,u(t,x)) + \sigma(x,u(t,x)) \dot{W}(t,x), t > 0,\,\,\,x \in G;\\
u(0,x) = u_0(x)
\end{cases}
\end{equation}
\label{eqn1semi.notation}
where
\begin{itemize}
\item $G = \R^d_+:=\{x = (x_1, x_2,..., x_d) \in \R^d, x_d>0\}$;
\item $\rho(x):= e^{-|x|^2}$, $x \in G$;
\item $H = L^2_{\rho} (G)$;
\item
\begin{equation}\label{Srodinger}
A u := \frac{1}{\rho} \text{div}(\rho \nabla u);
\end{equation}
\item $D(A) := H^2_{\rho}(G) \cap H^1_{0,\rho}(G)$;
\item $f(x,u): G \times \R \to \R$ and $\sigma(x,u): G \times \R \to \R$ satisfy
\begin{equation}\label{Lip}
|f(x,u_1)-f(x,u_2)| \leq L |u_1-u_2|; \ \ |\sigma(x,u_1)-\sigma(x,u_2)| \leq L |u_1-u_2|,
\end{equation}
with Lipschitz constant $L$ independent of $x$;
\item
\begin{equation}\label{linf}
f(x,0) \in L^{\infty}(G) \text{ and } \sigma(x,0)\in L^{\infty}(G).
\end{equation}
\end{itemize}
Note that the elliptic operator $A$ given by (\ref{Srodinger})
appears in quantum mechanics in the analysis of the energy levels of
harmonic oscillator.

Under the assumptions above, the initial value problem (\ref{eqn1semi}) is well-posed (see Theorem \ref{thm3exist}, p. \pageref{thm3exist}).
Our main result for \eqref{eqn1semi} is the following theorem.
\begin{thm}\label{thm5nonlin}
Assume the Lipschitz constant $L$ in \eqref{Lip}
is sufficiently small (see (\ref{smallness}) and (\ref{small2}) below). Then the equation (\ref{eqn1semi}) has a unique solution $u^{*}(t,x)$ which is defined for all $t \in \R$ and satisfies
\[
\sup_{t \in \R} \E \|u^{*}(t,x)\|_H^2 < \infty.
\]
This solution is exponentially stable (in the sense of Definition
\ref{defExpStab}, page \pageref{defExpStab}).
\end{thm}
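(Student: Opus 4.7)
The plan is to construct $u^*$ as the fixed point of a contraction mapping on a space of bi-infinite-time processes, exploiting the fact that the semigroup $\{S(t)\}_{t\geq 0}$ generated by $A = \frac{1}{\rho}\text{div}(\rho \nabla u)$ on $H = L^2_\rho(G)$ with Dirichlet data on $G = \R^d_+$ satisfies the exponential contraction \eqref{contraction}. The operator $A$ is self-adjoint in $H$ (this is the natural weighted structure for the Gaussian density $\rho$), has compact resolvent, and possesses a strictly positive bottom of the spectrum due to the Dirichlet boundary on $\{x_d = 0\}$ together with the confining weight; I would first record this spectral-gap fact, extracting the constants $M,\gamma>0$ in \eqref{contraction}.

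Let $\mathcal{X}$ denote the Banach space of $\mathcal{F}_t$-adapted, mean-square continuous processes $u: \R \to L^2(\Omega; H)$ with $\|u\|_\mathcal{X}^2 := \sup_{t \in \R} \E\|u(t)\|_H^2 < \infty$, and define the integral operator
\begin{equation*}
(\mathcal{T}u)(t) := \int_{-\infty}^t S(t-r)\, f(\cdot, u(r))\, dr + \int_{-\infty}^t S(t-r)\, \sigma(\cdot, u(r))\, dW(r).
\end{equation*}
The first task is to check that $\mathcal{T}$ sends $\mathcal{X}$ into itself: the exponential decay of $S(t-r)$ together with the linear growth of $f$ and $\sigma$ implied by \eqref{Lip}--\eqref{linf} makes the drift Bochner integral converge absolutely in mean square with bound of order $M^2/\gamma^2$, while the It\^o isometry for the stochastic integral (using the Hilbert--Schmidt structure of the noise fixed at the start of Section \ref{SecInvMeasUpperHalf}) gives $\int_{-\infty}^t \|S(t-r)\sigma(u(r))Q^{1/2}\|_{HS}^2\,dr \lesssim M^2/\gamma$.

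Next I would show $\mathcal{T}$ is a strict contraction on $\mathcal{X}$ when $L$ is small. Taking differences and applying the same Bochner and It\^o estimates with the Lipschitz property \eqref{Lip} in place of the growth bound yields an inequality of the shape
\begin{equation*}
\|\mathcal{T}u - \mathcal{T}v\|_\mathcal{X}^2 \leq C\!\left(\frac{L^2}{\gamma^2} + \frac{L^2 \operatorname{Tr}(Q)}{\gamma}\right)\|u - v\|_\mathcal{X}^2,
\end{equation*}
which is a contraction precisely under smallness conditions of the form \eqref{smallness}--\eqref{small2}. Banach's fixed-point theorem then produces a unique $u^* \in \mathcal{X}$; using the semigroup property inside the fixed-point identity shows $u^*$ is a mild solution of \eqref{eqn1semi} on any finite initial-value interval, and the bound $\sup_t \E\|u^*(t)\|_H^2 < \infty$ is built into the ambient space.

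Finally, for exponential stability, given any other mild solution $u$ with deterministic data $u_0$ at time $0$, I would subtract the mild identities for $u$ and $u^*$ to get
\begin{equation*}
u(t)-u^*(t) = S(t)(u_0 - u^*(0)) + \int_0^t S(t-r)\bigl(f(u)-f(u^*)\bigr)dr + \int_0^t S(t-r)\bigl(\sigma(u)-\sigma(u^*)\bigr)dW(r),
\end{equation*}
multiply $\E\|u(t)-u^*(t)\|_H^2$ by $e^{\alpha t}$ for a small $\alpha \in (0,2\gamma)$, and apply the It\^o isometry followed by Gronwall's inequality; the smallness of $L$ absorbs the Lipschitz terms and yields $\E\|u(t)-u^*(t)\|_H^2 \leq C e^{-\alpha t} \E\|u_0 - u^*(0)\|_H^2$, which is exponential stability in the sense of Definition \ref{defExpStab}. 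The main obstacle will be the joint bookkeeping that couples the spectral gap $\gamma$, the trace of the noise covariance, and the Lipschitz constant $L$ so that a single smallness condition simultaneously closes the fixed-point contraction and the Gronwall decay, and that is precisely what \eqref{smallness}--\eqref{small2} will encode.
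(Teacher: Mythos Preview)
Your proposal is correct and matches the paper's proof: the paper constructs $u^*$ by Picard iteration, defining $u_{n+1}$ as the unique bounded-on-$\R$ solution of the \emph{linear} equation with inhomogeneities $f(u_n)$ and $\sigma(u_n)$ (this linear step is isolated as a separate Theorem~\ref{thm4lin}, whose solution formula is exactly your $\mathcal{T}u_n$), and then proves exponential stability by the same Gronwall argument after multiplying by $e^{2t}$. The only difference is cosmetic---the paper unpacks the Picard iterates explicitly and shows they form a Cauchy sequence, whereas you invoke Banach's fixed-point theorem directly on $\mathcal{T}$; the contraction constants are identical and lead to the same smallness conditions \eqref{smallness}--\eqref{small2}.
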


In Section \ref{SecUniqInvMeas}, the above solution will be used to
prove the existence and uniqueness of the invariant measure for
\eqref{eqn1semi}. In fact, it will be shown that $u^*$ is a stationary
random process.

\begin{remark}
Our approach was motivated by the following simple observation: if $v(t,x), x \in [0,1], t \in \R$ solves
\begin{equation}\label{1d}
\begin{cases}
v_t(t,x) = v_{xx}(t,x);\\
v(t,0) = v(t,1) = 0, \ t \in \R;\\
v(0,x) = \f(x), \ x \in [0,1],
\end{cases}
\end{equation}
then the only exponentially stable solution that satisfies
\[
\sup_{t \in \R} \|v(t,x)\|^2_{L^2([0,1])} < \infty
\]
is $v\equiv 0$ (with $\varphi\equiv 0$).
Theorem \ref{thm5nonlin} is an analog of this fact for the
nonlinear stochastic reaction-diffusion equation (\ref{eqn1semi}).
\end{remark}
\begin{remark}
In contrast with Theorems \ref{thm1} and \ref{thm2}, where the condition $d \geq 3$ is essential, here there is no restriction on the spatial dimension.
\end{remark}

The paper is organized as follows. Section 2 deals with the existence of invariant measure for the reaction-diffusion equation \eqref{eqn1}
with $A = \Delta$ in $\R^d$ and $d \geq 3$ (Theorems  \ref{thm1} and \ref{thm2}). Section 3 is devoted to the proof of Theorem \ref{thm5nonlin} for
equation \eqref{eqn1semi}.
The uniqueness of the invariant measure as a consequence of Theorem \ref{thm5nonlin} is established in Section 4.

\section{Invariant measure in the entire space}\label{SecInvMeasEntire}

In this section, we study the problem (\ref{eqn1}) with $A = \Delta$ and $G = \R^d$. Let $\{e_k, k \geq 1\}$ be an orthonormal basis in $L^2(\R^d)$ such that
\begin{equation}\label{basis}
\sup_{k} \|e_k(x)\|_{L^{\infty}(\R^d)} \leq 1.
\end{equation}
We note that such a basis exists. For example, consider
 \[
 e^{(k)}_{n}(x):= \frac{1}{\pi} \{\sin\left(n x\right), \cos\left(n x \right)\}  \chi_{[2 \pi k, 2 \pi(k+1)]}(x), \, n \geq 0, \, k \in \Z,
 \]
 where $\chi_{[2 \pi k, 2 \pi(k+1)]}(x)$ is the characteristic function of $[2 \pi k, 2 \pi (k+1)]$. Clearly, $$\sup_{n \geq 0, k \in \Z} \|e_n^{(k)}(x)\|_{L^{\infty}(\R)} \leq 1,$$ and
 \[
 \bigcup_{n \geq 0, k \in \Z} e_n^{(k)}(x) \text{ is a basis in } L^2(\R).
 \]
 The basis in  $\R^d$ for $d>1$ can be constructed analogously.

We now define the Wiener process $W(t,x)$ as
\begin{equation}\label{Wiener}
W(t,x) := \sum_{k=1}^{\infty}\sqrt{a_k} \beta_k(t) e_k(x)
\end{equation}
with
\[
a:= \sum_{k=1}^{\infty} a_k < \infty
\]
In the above, the $\beta_k(t)$'s are independent standard one dimensional
Wiener processes on $t \geq 0$.
Let $(\Omega, \mathcal{F}, P)$ be a probability space, and
$\mathcal{F}_{t}$ is a right-continuous filtration such that $W(t,x)$ is adapted to $\mathcal{F}_t$ and $W(t)-W(s)$ is independent of $\mathcal{F}_s$ for all  $s<t$.
As shown in \cite[p. 88-89]{DapZab92}, (\ref{Wiener}) is convergent both
in mean square and with probability one.


We next proceed with a rigorous definition of a {\it mild solution} of (\ref{eqn1}) \cite{DapZab92, DapZab96}:

\begin{definition}\label{defMild0}
Let $H$ be a Hilbert space of functions defined on $\R^d$. An $\mathcal{F}_t$-adapted random process $u(t,\cdot) \in H$ is called a mild solution of (\ref{eqn1}) if it satisfies the following integral relation for $t \geq 0$:
\begin{equation}\label{milds}
u(t,\cdot) = S(t)u_0(\cdot) + \int_{0}^{t} S(t-s)f(\cdot, u(s,\cdot)) ds + \int_{0}^{t} S(t-s) \sigma(u(s,\cdot)) d W(s, \cdot)
\end{equation}
where $\{S(t), t \geq 0\}$ is the semigroup for the linear heat equation,
i.e.
\[
S(t) u(x) := \int_{\R^d} G(t,x-y) u(y) dy.
\]
\end{definition}
It was shown (see for example in \cite{Man99, Man01, AssMan01}) that if both $f$ and $\sigma$ are Lipschitz in $u$, the initial value problem (\ref{eqn1}) admits a unique mild solution $u(t,x)$ if $H = L^2_{\rho}(\R^d)$.
Moreover, as proved in \cite[Proposition 2.1]{TesZab98}, if two
non-negative admissible weights $\rho(x)$ and $\zeta(x)$ in $\R^d$ satisfy
\begin{equation}\label{weights}
\int_{\R^d} \frac{\zeta(x)}{\rho(x)} \, dx < \infty,
\end{equation}
then
\begin{equation}
S(t): L^2_{\rho}(\R^d) \to L^2_{\zeta}(\R^d) \text{ is a compact map. }
\end{equation}
Based on this result, the theorem of Krylov-Bogoliuibov yields the
existence of invariant measure on $L_\zeta^2(\R^d)$ provided
\begin{equation}\label{boundedness}
\sup_{t \geq 0} \E\|u(t,x)\|^2_{L^2_{\rho}(\R^d)} < \infty.
\end{equation}
(\cite[Theorem 3.1]{TesZab98} and \cite[Theorem 2]{AssMan01}).
The statements of Theorems \ref{thm1} and \ref{thm2} exactly show the
existence of a solution satisfying the above condition.

We now proceed to the proof of Theorem \ref{thm1}.

\begin{proof}
Let $u(t,x)$ be a solution of (\ref{eqn1}).
Applying the elementary inequality
$(a+b+c)^2 \leq 3 (a^2 + b^2 + c^2)$ to \eqref{milds}, we have
\[
\|u(t,x)\|^2_{H} \leq 3 \Big(I_1(t) + I_2(t) + I_3(t)\Big)
\]
where
\[
I_1(t) = \int_{\R^d}|S(t) u(0,x)|^2 \rho dx;
\]
\[
I_2(t) = \int_{\R^d}\left|\int_{0}^{t} S(t-s)f(x, u(s,x)) ds\right|^2 \rho dx;
\]
\[
I_3(t) = \int_{\R^d}\left|\int_{0}^{t} S(t-s) \sigma(u(s,x)) d W(s, x)\right|^2 \rho dx.
\]
We will show that
\[
\sup_{t \geq 0} \E I_{i}(t) < \infty, \ \ \ i = 1,2,3.
\]

For $I_1$, we have, by the $L^2$-contraction property of $S(t)$ that
\[
\sup_{t \geq 0} \E I_{1}(t) \leq \|\rho\|_{\infty} \sup_{t \geq 0} \E \|S(t) u(0,x)\|^2_{L^2(\R^d)} \leq \|\rho\|_{\infty} \E \| u(0,x)\|^2_{L^2(\R^d)} < \infty.
\]


We next estimate $I_2$ in the following manner:
\[
I_2(t) = \int_{\R^d}\left|\int_{0}^{t} \int_{\R^d}G(t-s,x-y)f(y, u(s,y)) dy ds\right|^2 \rho dx
\]
\[
\leq
2\int_{\R^d}\left|\int_{0}^{t-1} \int_{\R^d}G(t-s,x-y)f(y, u(s,y)) dy ds\right|^2 \rho dx
 \]
 \[
 + 2\int_{\R^d}\left|\int_{t-1}^{t} \int_{\R^d}G(t-s,x-y)f(y, u(s,y)) dy ds\right|^2 \rho dx.
\]
First, using (\ref{bound}), we have
\[
\int_{\R^d}\left|\int_{t-1}^{t} \int_{\R^d}G(t-s,x-y)f(y, u(s,y)) dy ds\right|^2 \rho dx \leq \|\f\|^2_{\infty}  \|\rho\|_{L^1(\R^d)}
\]
Second, consider,
\[
\int_{\R^d}\left|\int_{0}^{t-1} \int_{\R^d}G(t-s,x-y)f(y, u(s,y)) dy ds\right|^2 \rho dx
\]
\[
\leq \int_{\R^d}\left|\int_{0}^{t-1} \int_{\R^d}\frac{1}{(4 \pi (t-s))^{d/2}}e^{ - \frac{|x-y|^2}{4 (t-s)}} \f(y) dy ds\right|^2 \rho dx
\]
\[
\leq \| \rho \|_{L^1(\R^d)}  \|\f\|^2_{L^1(\R^d)}  \left|\int_{0}^{t-1}\frac{ds}{(4 \pi (t-s))^{d/2}}\right|^2.
\]
Therefore
\[
\sup_{\t \geq 0} \E I_2(t) \leq \|\f\|^2_{\infty}  \|\rho\|_{L^1(\R^d)}  + \frac{1}{(4 \pi)^{d/2}} \| \rho \|_{L^1(\R^d)}  \|\f\|^2_{L^1(\R^d)} \int_{1}^{\infty}\frac{d \tau}{\tau^{d/2}} <  \infty
\]
where the condition $d\geq 3$ is used in the last step.

It remains to show that
$\displaystyle \sup_{t \geq 0} \E I_3(t) < \infty$.
First note that
\[
\E\left|\int_{0}^{t} \int_{\R^d}G(t-s,x-y) \sigma(y,u(s,y)) dy dW(s,y)\right|^2
\]
\[
= \E  \int_{0}^{t} \sum_{k=1}^{\infty} a_k \left(\int_{\R^d} G(t-s,x-y) \sigma(y,u(s,y)) e_k(y) dy\right)^2 ds
\]
\[
= \E \int_{0}^{t-1} \sum_{k=1}^{\infty} a_k \left(\int_{\R^d} G(t-s,x-y) \sigma(y,u(s,y)) e_k(y) dy\right)^2 ds
\]
\[
+ \E \int_{t-1}^{t} \sum_{k=1}^{\infty} a_k \left(\int_{\R^d} G(t-s,x-y) \sigma(y,u(s,y)) e_k(y) dy\right)^2 ds
\]
\[
\leq \sigma_0^2 \int_{0}^{t-1} \int_{\R^d} G^2(t-s,x-y) dy \sum_{k=1}^{\infty} a_k \int_{\R^d} e_k^2(y) dy
+ \sigma_0^2 \int_{t-1}^{t} \sum_{k=1}^{\infty} a_k \left(\int_{\R^d} G(t-s,x-y) dy\right)^2 ds.
\]
\[
\leq
a \sigma_0^2 \left( \int_{0}^{t-1} \int_{\R^d} G^2(t-s,y) dy ds + 1 \right)
\leq
a \sigma_0^2 \left( \int_{0}^{t-1} \frac{1}{(t-s)^{\frac{d}{2}}} ds + 1 \right)
\leq C < \infty
\]
Therefore,
\[
\E I_3(t)
= \int_{\R^d}
\E\left|\int_{0}^{t} \int_{\R^d}G(t-s,x-y) \sigma(y,u(s,y)) dy dW(s,y)
\right|^2 \rho dx
\leq
C\|\rho\|_{L^1(\R^d)}
\]
which is uniformly bounded independent of $t$, thus concluding the proof.
\end{proof}

We next prove Theorem \ref{thm2}.
\begin{proof}
(For simplicity, we omit the $x$ variable in $f$ and $\sigma$.)
Let $\| u(0,x) \|_{L^2(\R^d)} = Z$ and $M:=\max\{Z,N\}$ where $N$ is given by the condition (ii). For given $t>0$, consider the random variable
\[
\tau =
\begin{cases}
\sup \{ 0 < s \leq t: \|u(s,x)\|_{L^2(\R^d)} = M + 1\} \text{ if the given set is nonempty }\\
t, \text{ otherwise. }
\end{cases}
\]
Introduce
\[
C :=\{\omega \in \Omega: \|u(t,x,\omega)\|_{\R^d} > M+1\}
\]
It follows from the local H\"{o}lder continuity in time of solutions of (\ref{eqn1}) \cite{SanSar02} that $\|u(s,x,\omega)\|_{L^2(\R^d)}$ is continuous in $s$. Therefore, for
$\omega \in \left\{\tau(\omega) < t\right\}\bigcap C$, we have
\[
\|u(s,x,\omega)\|_{L^2(\R^d)}> M+1, \ \ s \in (\tau, t]
\]
Note that a stochastic integral $f(t):=\int_{0}^{t}g(s)dW(s)$ is an a.e. continuous function of $t$. Thus if $\tau$ is another random variable,
the expression $f(\tau)$ is well defined \cite{GihSko}. This fact, in conjunction with the uniqueness property of the mild solution, enables us to write
\begin{equation}\label{mild1}
u(t) = S(t-\tau) u(\tau) + \int_{\tau}^{t} S(t-s) f(u(s)) ds + \int_{\tau}^{t} S(t-s) \sigma(u(s)) dW(s).
\end{equation}
Furthermore,
\[
\E \|u(t,\omega)\|_{L^2(\R^d)}^2 = \int_{\{\omega: \|u\|_{L^2(\R^d)} \leq M+1\}} \|u(t,\omega)\|_{L^2(\R^d)}^2 dP(\omega) + \int_{C}  \|u(t,\omega)\|_{L^2(\R^d)}^2 dP(\omega)
\]
\[
\leq (M+1)^2 + \int_{C} \|u(t,\omega)\|^2 dP(\omega)
\]
It follows from the condition (ii) and (\ref{mild1}) that for $\omega \in C$
\[
u(t, \omega) = S(t-\tau) u(\tau, \omega) + \int_{\tau}^{t} S(t - \tau) \sigma(u(s)) d W(s)
\]
then
\[
\int_{C} \|u(t,\omega)\|^2_{L^2(\R^d)} d P(\omega) \leq 2 \left[\int_{C}\|S(t-\tau)u(\tau)\|_{L^2(\R^d)}^2 dP(\omega) + \int_{C}\left\|\int_{\tau}^{t} S(t-s) \sigma(u(s)) dW(s)\right\|_{L^2(\R^d)}^2\right]
\]
\begin{equation}\label{xxx}
\leq 2 \left[\E\|S(t-\tau)u(\tau)\|_{L^2(\R^d)}^2  + \E \left\|\int_{\tau}^{t} S(t-s) \sigma(u(s)) dW(s)\right\|_{L^2(\R^d)}^2\right]
\end{equation}
The first term is bounded by using the contraction property of $S(t)$ in
$L^2(\R^d)$:
\[
\E\|S(t-\tau)u(\tau)\|_{L^2(\R^d)}^2 \leq \E \|u(\tau)\|_{L^2(\R^d)}^2 = (M+1)^2
\]
For the second term in (\ref{xxx}), we compute,
\[
\E \left\| \int_{\tau}^{t} S(t-s) \sigma(u(s)) d W(s)\right\|_{L^2(\R^d)}^2 \leq \E\left(\sup_{0 \leq \nu \leq t} \left\| \int_{\nu}^{t} S(t-s) \sigma(u(s)) d W(s)\right\|_{L^2(\R^d)}^2\right)
\]
\[
\leq 2 \E \left\| \int_{0}^{t} S(t-s) \sigma(u(s)) d W(s)\right\|^2_{L^2(\R^d)} + 2 \E \left( \sup_{0 \leq \nu \leq t} \left\| \int_{0}^{\nu} S(t-s) \sigma(u(s)) d W(s)\right\|^2_{L^2(\R^d)}\right)
\]
By the following Doob's Inequality for martingales,
\[
\E\left(
\sup_{0 \leq \nu \leq t}\left|\int_{0}^{\nu} \sum_{k=1}^{\infty} g_k(s) d \beta_{k}(s)\right|^2\right)
\leq 4 \sum_{k=1}^{\infty} \E \int_{0}^{t} |g_k(s)|^2 ds,
\]
we have
\[
\E \left(\sup_{0\leq \nu \leq t}  \left\| \int_{0}^{\nu} S(t-s) \sigma (u(s)) d W(s)\right\|^2_{L^2(\R^d)} \right)
\]
\[
= \E \left(\sup_{0\leq \nu \leq t} \left(\int_{\R^d} \left| \int_{0}^{\nu} \sum_{k=1}^{\infty} \sqrt{a_k} \int_{\R^d}G(t-s,x-y)\sigma(y,u(s,y,\omega)) e_k(y) dy d \beta_k(s)\right|^2 dx\right)\right)
\]
\begin{equation}\label{bnd_thm2}
\leq 4 \int_{\R^d}\left( \E \int_{0}^{t}  \sum_{k=1}^{\infty} a_k \left( \int_{\R^d}G(t-s,x-y)\sigma(y,u(s,y,\omega)) e_k(y) dy\right)^2 ds \right) dx
\end{equation}
Similarly to the proof of Theorem \ref{thm1}, we split $\int_{0}^{t} =\int_{0}^{t-1} + \int_{t-1}^{t}$. Then
\[
\int_{\R^d}\left( \E \int_{t-1}^{t}  \sum_{k=1}^{\infty} a_k \left( \int_{\R^d}G(t-s,x-y)\sigma(y,u(s,y,\omega)) e_k(y) dy\right)^2 ds \right)  dx
\]
\[
\leq \sum_{k=1}^{\infty} a_k \int_{t-1}^{t} \int_{\R^d} \left(\int_{\R^d}G(t-s,x-y) \, dy\right)  \left(\int_{\R^d}G(t-s,x-y) \psi^2(y) e_k^2 (y) dy\right) dx ds
\]
\[
= \sum_{k=1}^{\infty} a_k \int_{t-1}^{t} \int_{\R^d}
\int_{\R^d}G(t-s,x-y) \psi^2(y) e_k^2 (y) dy dx ds
\]
\[
= \sum_{k=1}^{\infty} a_k \int_{t-1}^{t}
\int_{\R^d}G(t-s,x-y) dx \int_{\R^d}\psi^2(y) e_k^2 (y) dy ds
\leq \sum_{k=1}^{\infty} a_k \|\psi\|_{L^2(\R^d)}^2 < \infty.
\]
Next,
\[
\int_{\R^d}\left( \E \int_{0}^{t-1}  \sum_{k=1}^{\infty} a_k \left( \int_{\R^d}G(t-s,x-y)\sigma(y,u(s,y,\omega)) e_k(y) dy\right)^2 ds \right) dx
\]
\[
\leq \sum_{k=1}^{\infty} a_k \int_{0}^{t-1} \int_{\R^d} \left( \int_{\R^d} G^2(t-s,x-y) e_k^2(y) \, dy \right)\left( \int_{\R^d} \psi^2(y) \, dy\right) dx ds
\]
\[
\leq \sum_{k=1}^{\infty} a_k \|\psi\|^2_{L^2(\R^d)}
\int_{0}^{t-1} \int_{\R^d} G^2(t-s,z) dz dt
\leq \sum_{k=1}^{\infty} a_k \|\psi\|^2_{L^2(\R^d)}
\int_{0}^{t-1} \frac{1}{(t-s)^{\frac{d}{2}}}dt
< \infty.
\]
The above complete the proof of Theorem \ref{thm2}.
\end{proof}

\section{Proof of Theorem 3}\label{SecInvMeasUpperHalf}
In this section, we analyze the equation (\ref{eqn1semi}).
We follow the notations immediately after \eqref{eqn1semi} on page
\pageref{eqn1semi.notation}. For the proof, we introduce the following
infinite dimensional Wiener process:
\begin{equation}\label{Wiener_all t}
W(t,x) = \sum_{k=1}^{\infty}\sqrt{a_k} \beta_k(t) e_k(x)
\end{equation}
where $e_k(x)$'s satisfy (\ref{basis}) and we also require
\[
a:= \sum_{k=1}^{\infty} a_k < \infty
\]
In contrast with the previous section,
the Wiener process in this section is defined for all $t \in \R$. This can be constructed by the following formula:
\[
\beta_k(t) = \left\{
\begin{array}{ll}
\beta_k^{(1)}(t),&\text{for}\,\,\,t \geq 0\\
\beta_k^{(2)}(-t),&\text{for}\,\,\,t \leq 0
\end{array}
\right.,
\]
where $\beta_k^{(1)}$ and $\beta^{(2)}_k$ are independent standard
one dimensional Wiener processes.
Also, let
\[
\F_t := \bigcup\{\beta_k(v) - \beta_k(u): u \leq v \leq t, k \geq 1\}
\]
be the $\sigma$-algebra generated by $\{\beta_k(v) - \beta_k(u): u \leq v \leq t, k \geq 1\}$.



Our proof heavily relies on the spectral properties of the operator $A$
in some weighted space. These are described next.

\subsection{Eigenvalue problem for $A$.}
In the case $d=1$, consider the weight function $\rho = e^{-x^2}$.
We then have the following problem for determining the spectrum: find all
$\mu \in \R$ and $w \in H = L^2_{\rho}(\R^+)$ such that
\begin{equation}\label{eig1}
e^{x^2}\frac{d}{d x}\left(\frac{d w}{d x} e^{-x^2}\right) = \mu w, \  x > 0;
\end{equation}
satisfying
\begin{equation}\label{BC at infinity}
\int_{0}^{\infty} w^2 e^{-x^2} dx < \infty
\end{equation}
and
\begin{equation}\label{BC0}
w(0) = 0.
\end{equation}
The problem (\ref{eig1}) is a well known problem for harmonic oscillator
\cite[p. 218-219]{SamPerPar12}.
It has a nonzero solution satisfying (\ref{BC at infinity}) only for $\mu = - 2 n, n = 0, 1, 2, ... $. The solutions are the Hermite polynomials $w_n = H_n(x)$. Moreover, the condition (\ref{BC0}) implies that $n$ must be odd. Therefore, the eigenvalues of (\ref{eig1}) are $\mu = 2 - 4p, p = 1, 2, 3,...$

If $d > 1$, the eigenvalue problem reads as
\begin{equation}\label{eig>1}
\Delta w - 2 (\nabla w, x) = \mu w,
\end{equation}
subject to
\begin{equation}
\int_{\R^d_+} w^2 e^{-|x|^2} dx < \infty
\end{equation}
and
\begin{equation}
w(x_1,...,x_{d-1}, 0) = 0.
\end{equation}
We proceed with looking for the solutions of (\ref{eig>1}) using
separation of variables,
\[
w(x_1,x_2,...,x_d) = w_1(x_1) w_2(x_2) ... w_d(x_d),
\]
with $w_i$ solving
\begin{equation}\label{eigd-1}
e^{x_i^2}\frac{d}{d x_i}\left(\frac{d w_i}{d x_i} e^{-x_i^2}\right) = \lambda_i w_i, \ i = 1,...,d
\end{equation}
subjects to
\begin{equation}\label{BCd-1}
\int_{\R} w_i^2(x) e^{-x^2} d x < \infty, i = 1,..., d-1;
\end{equation}
and
\begin{equation}\label{BCd}
\int_{0}^{\infty} w_d^2(x) e^{-x^2} d x < \infty, \ w_d(0) = 0.
\end{equation}
It follows from the condition (\ref{BCd-1}) that
for $i = 1,...,d-1$, we have
\[
\lambda_i = -2 p, p = 0, 1, 2,...
\]
while due to (\ref{BCd})
\[
\lambda_d = -2 - 4p, p=0, 1, 2, ...
\]
An arbitrary eigenvalue $\mu$ of (\ref{eig>1}) satisfies $\mu = \lambda_1 + ... + \lambda_d$. In particular, the largest eigenvalue of
(\ref{eig1}) is given by $\mu_1 = -2$ (which corresponds to $\lambda_1 = ... = \lambda_{d-1} = 0, \lambda_d = -2$).

With the above, we have the following technical lemmas.

\begin{lem}\label{lem1}
Let $S(t):H \to H$ be a semigroup generated by $A$, i.e. $S(t)u_0(x): = u(t,x)$, where $u(t,x)$ solves
\begin{equation}\label{heat}
\begin{cases}
u_t(t,x) = A u(t,x)\\
u(0,x) = u_0(x).
\end{cases}
\end{equation}
Then
\begin{equation}\label{exp_est}
\|S(t)u_0\|_{H} \leq e^{-2 t}\|u_0\|_{H}
\end{equation}
\end{lem}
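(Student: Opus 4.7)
The plan is to exploit the spectral decomposition of $A$ worked out just above the statement: every eigenvalue $\mu$ of $A$ in $H$ subject to the Dirichlet condition on $\{x_d = 0\}$ is of the form $\mu = \lambda_1 + \cdots + \lambda_d$ with $\lambda_i \in \{0,-2,-4,\ldots\}$ for $i \le d-1$ and $\lambda_d \in \{-2,-6,-10,\ldots\}$. In particular, every such $\mu$ satisfies $\mu \le -2$, with equality attained only at the top eigenvalue $\mu_1 = -2$. Once we know that the associated eigenfunctions form a complete orthonormal system in $H$, the bound \eqref{exp_est} is just the operator norm estimate $\|e^{tA}\| \le e^{\mu_1 t} = e^{-2t}$.

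The key step is to check that $A$ is symmetric and nonpositive on $D(A)=H^2_\rho(G) \cap H^1_{0,\rho}(G)$. A direct computation gives
\[
\langle Au, v\rangle_H \;=\; \int_G \frac{1}{\rho}\,\mathrm{div}(\rho\nabla u)\, v\, \rho\, dx \;=\; -\int_G \rho\,\nabla u\cdot\nabla v\, dx,
\]
where the boundary terms on $\{x_d=0\}$ vanish by the Dirichlet condition and the boundary terms at infinity vanish by the Gaussian decay of $\rho$. This identity is symmetric in $(u,v)$ and gives $\langle Au,u\rangle_H = -\|\nabla u\|_{L^2_\rho(G)}^2 \le 0$. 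With this, standard spectral theory of self-adjoint operators with compact resolvent in a Gaussian-weighted space produces an orthonormal basis $\{\phi_k\}$ of $H$ consisting of the product Hermite eigenfunctions from the derivation above, with eigenvalues $\mu_k\le -2$. Expanding $u_0 = \sum_k c_k \phi_k$ with $\sum_k c_k^2 = \|u_0\|_H^2$, the semigroup acts by $S(t)u_0 = \sum_k c_k e^{\mu_k t}\phi_k$, so
\[
\|S(t)u_0\|_H^2 \;=\; \sum_k c_k^2 e^{2\mu_k t} \;\le\; e^{-4t}\sum_k c_k^2 \;=\; e^{-4t}\|u_0\|_H^2,
\]
and taking square roots yields \eqref{exp_est}.

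The main obstacle I expect is the justification of completeness of the product Hermite system in $H$ with the mixed condition (Dirichlet on the hyperplane, Gaussian integrability at infinity). Completeness for the full-line Hermite family in $L^2_\rho(\R)$ is classical, and the half-line Dirichlet version follows by taking only the odd Hermite functions; the tensor product decomposition is then compatible because $\rho = \prod_i e^{-x_i^2}$ factorizes and the eigenvalue problem separates as already shown. A second, more elementary route avoids invoking completeness altogether: differentiate $\|u(t,\cdot)\|_H^2$ along the flow to obtain
\[
\tfrac{d}{dt}\|u\|_H^2 \;=\; 2\langle Au,u\rangle_H \;=\; -2\|\nabla u\|_{L^2_\rho(G)}^2,
\]
then apply the weighted Poincar\'e-type inequality $\|\nabla u\|_{L^2_\rho(G)}^2 \ge 2\|u\|_{L^2_\rho(G)}^2$ for $u\in H^1_{0,\rho}(G)$, which is exactly the Rayleigh-quotient statement $-\mu_1 = 2$ and can be verified directly on the dense set of smooth compactly supported functions via the separated eigenfunction expansion in one variable at a time. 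Gr\"onwall's inequality then gives $\|u(t)\|_H^2 \le e^{-4t}\|u_0\|_H^2$, recovering \eqref{exp_est} without having to appeal to basis completeness.
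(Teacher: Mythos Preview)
Your primary argument is correct and is exactly the paper's proof: expand $u_0$ in the orthonormal eigenbasis $\{\phi_k\}$ of $A$, use $c_k(t)=c_k^0 e^{\mu_k t}$, and bound $\sum c_k^2 e^{2\mu_k t}\le e^{-4t}\sum c_k^2$ from $\mu_k\le -2$. The paper simply asserts the eigenbasis and carries out this computation without the additional justification you supply for symmetry and completeness; your alternative energy/Poincar\'e route is sound but does not appear in the paper.
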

\begin{proof}
Let $0 > \mu_1 > \mu_2 \geq \mu_3 \geq...$, with $\mu_1 = -2$, be the eigenvalues of $A$,  and let $\{\f_k(x), k \geq 1\} \in H$ be the corresponding orthonormal eigenbasis. We have the following representations for  $u_0 \in H$ and $u(t,x) \in H$:
\[
u_0(x) = \sum_{k=1}^{\infty} c^0_k \f_k(x)
\]
and
\[
u(t,x) = \sum_{k=1}^{\infty} c_k(t) \f_k(x)
\]
It follows from (\ref{heat}) that
\[
\sum_{k=1}^{\infty} c_k^{'}(t) \f_k(x) = \sum_{k=1}^{\infty} c_k(t) A \f_k(x) = \sum_{k=1}^{\infty} \mu_k c_k(t) \f_k(x)
\]
Thus
\[
c_k(t) = c^{0}_k e^{\mu_k t}
\]
Hence
\[
\|u(t,x)\|^2_{H} = \sum_{k=1}^{\infty}c_k^{2}(t) = e^{-4 t} \sum_{k=1}^{\infty} e^{(2\mu_k + 4)t} (c^{0}_{k})^{2} \leq e^{-4 t} \|u_0(x)\|_{H}^2
\]
concluding the proof.
\end{proof}
\begin{lem}\label{lem2}
For any $u \in H$, we have
\begin{equation}\label{norm1}
\E \left\|\int_{t_0}^{t} S(t-s) \sigma(u(s)) d W(s) \right\|^2_{H}  \leq \sum_{k=1}^{\infty} a_k \int_{t_0}^{t} e^{-4(t-s)} \E \|\sigma(u(s))\|_H^2 ds
\end{equation}
\end{lem}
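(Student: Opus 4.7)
The plan is to combine the It\^o isometry for Hilbert-space valued stochastic integrals with the exponential decay estimate of Lemma \ref{lem1}, exploiting the uniform $L^\infty$ bound on the basis $\{e_k\}$ from (\ref{basis}). The proof is essentially a direct computation once the stochastic integral is unfolded through the basis representation of $W$.

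First I would use the expansion $dW(s,x) = \sum_{k=1}^\infty \sqrt{a_k}\, e_k(x)\, d\beta_k(s)$ to rewrite
\[
\int_{t_0}^{t} S(t-s)\sigma(u(s))\, dW(s) = \sum_{k=1}^\infty \sqrt{a_k} \int_{t_0}^{t} S(t-s)\bigl[\sigma(u(s,\cdot))\, e_k(\cdot)\bigr]\, d\beta_k(s),
\]
where each inner integral is a scalar It\^o integral with an $H$-valued integrand. Since the $\beta_k$ are independent one-dimensional Wiener processes, the It\^o isometry in the Hilbert space $H$ yields
\[
\E \left\|\int_{t_0}^{t} S(t-s)\sigma(u(s))\, dW(s)\right\|_H^2 = \sum_{k=1}^\infty a_k \int_{t_0}^{t} \E \bigl\|S(t-s)\bigl[\sigma(u(s,\cdot))\, e_k(\cdot)\bigr]\bigr\|_H^2\, ds.
\]

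Next I would invoke Lemma \ref{lem1}, which gives $\|S(t-s)v\|_H \leq e^{-2(t-s)}\|v\|_H$, so that each summand is bounded by $e^{-4(t-s)}\|\sigma(u(s,\cdot))\, e_k(\cdot)\|_H^2$. Finally, because $\|e_k\|_{L^\infty} \leq 1$ by (\ref{basis}), pointwise multiplication by $e_k$ is a contraction on the weighted $L^2$ space $H = L^2_\rho(G)$:
\[
\|\sigma(u(s,\cdot))\, e_k(\cdot)\|_H^2 = \int_G |\sigma(u(s,x))|^2 |e_k(x)|^2 \rho(x)\, dx \leq \|\sigma(u(s))\|_H^2.
\]
Combining these three bounds yields the claimed inequality.

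The only subtlety is the rigorous justification of the It\^o isometry step, since it depends on interpreting $S(t-s)\sigma(u(s))$ as a Hilbert--Schmidt-type operator acting on the Wiener process $W$; this is standard in the Da Prato--Zabczyk framework \cite{DapZab92} under the assumption $\sum a_k < \infty$, but one should briefly note that the integrand is $\F_s$-adapted and square-integrable (which follows from the Lipschitz assumption (\ref{Lip}) and the bound (\ref{linf}) once combined with the a priori control one expects from the well-posedness result referenced as Theorem \ref{thm3exist}). Apart from that, every step is a one-line invocation of a standard tool.
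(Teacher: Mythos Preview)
Your proof is correct and follows essentially the same route as the paper's own argument: expand $W$ through the basis $\{e_k\}$, apply the It\^o isometry to decouple the sum over $k$, use the decay bound of Lemma~\ref{lem1}, and conclude with the $L^\infty$ bound (\ref{basis}) on the $e_k$. The only cosmetic difference is that the paper writes out the $H$-norm as an integral over $G$ and applies the scalar It\^o isometry pointwise in $x$ before re-collecting into $\|\cdot\|_H$, whereas you invoke the Hilbert-space version directly; the two are equivalent here.
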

\begin{proof} It is a consequence of the following computation.
\[
\E \left\|\int_{t_0}^{t} S(t-s) \sigma(u(s)) d W(s) \right\|^2_{H}  = \E \left\|\sum_{k=1}^{\infty} \sqrt{a_k} \int_{t_0}^{t} S(t-s) \sigma(u(s)) e_k(x) d \beta_k(s) \right\|^2_{H}
\]
\[
= \int_G \E \left( \sum_{k=1}^{\infty} \sqrt{a_k} \int_{t_0}^{t} S(t-s) \sigma(u(s)) e_k(x) d \beta_k(s)\right)^2 \rho(x) \, dx
\]
\[
= \int_G \sum_{k=1}^{\infty} a_k \E \left(\int_{t_0}^{t} S(t-s) \sigma(u(s)) e_k(x) d \beta_k(s)\right)^2 \rho(x) \, dx
\]
\[
 = \sum_{k=1}^{\infty} a_k  \int_G \int_{t_0}^{t} \E\left(S(t-s) \sigma(u(s)) e_k(x)\right)^2 ds \, \rho(x) \,  dx
 \]
 \[
 = \sum_{k=1}^{\infty} a_k \int_{t_0}^{t} \E\left\|S(t-s) \sigma(u(s)) e_k(x)\right\|_H^2 ds \leq \sum_{k=1}^{\infty} a_k \int_{t_0}^{t} e^{-4(t-s)} \E \|\sigma(u(s))\|_H^2 ds.
\]
\end{proof}
\subsection{Well-posedness for Equation \eqref{eqn1semi}}
Here we show the existence and uniqueness of the solution for
\eqref{eqn1semi}. For simplicity again, we omit the $x$ variable in
$f$ and $\sigma$.
\begin{thm}\label{thm3exist}
Assume that $f$ and $\sigma$ satisfy (\ref{Lip}) and (\ref{linf}). Then, for given $u_0(x) \in H$, there exists a unique mild solution of (\ref{eqn1semi}) (see Definition \ref{defMild0}).
\end{thm}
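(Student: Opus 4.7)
The plan is to prove existence and uniqueness via a Banach fixed-point (Picard iteration) argument in a suitable space of adapted stochastic processes, using the semigroup estimate from Lemma \ref{lem1} and the stochastic convolution estimate from Lemma \ref{lem2}. For fixed $T>0$, I would work in the Banach space
\[
X_T := \Big\{ u: [0,T]\times \Omega \to H \text{ predictable with } \|u\|_{X_T}^2 := \sup_{t\in[0,T]} \E\|u(t)\|_H^2 < \infty \Big\},
\]
and define the Picard map
\[
(\Phi u)(t) := S(t) u_0 + \int_0^t S(t-s) f(u(s))\, ds + \int_0^t S(t-s) \sigma(u(s))\, dW(s).
\]
A fixed point of $\Phi$ in $X_T$ is exactly a mild solution on $[0,T]$.

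First I would check that $\Phi$ maps $X_T$ into itself. The homogeneous term satisfies $\|S(t)u_0\|_H \leq e^{-2t}\|u_0\|_H$ by Lemma \ref{lem1}. For the deterministic integral, the Lipschitz bound (\ref{Lip}) together with (\ref{linf}) yields $\|f(u)\|_H \leq L\|u\|_H + \|f(\cdot,0)\|_{L^\infty(G)} \|\rho\|_{L^1(G)}^{1/2}$, and combined with the contractivity of $S$ in $H$ this gives a bound on the drift term via Cauchy–Schwarz in time. For the stochastic convolution, Lemma \ref{lem2} combined with the analogous bound $\|\sigma(u)\|_H \leq L\|u\|_H + \|\sigma(\cdot,0)\|_{L^\infty(G)} \|\rho\|_{L^1(G)}^{1/2}$ gives control of $\E\|\int_0^t S(t-s)\sigma(u(s))\,dW(s)\|_H^2$ by $a$ and $\|u\|_{X_T}$; here the convergence $a=\sum_k a_k < \infty$ is essential.

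Next I would verify the contraction property. For two processes $u_1, u_2 \in X_T$, Cauchy–Schwarz in time combined with Lemma \ref{lem1} gives
\[
\E\Big\|\int_0^t S(t-s)(f(u_1)-f(u_2))\,ds\Big\|_H^2 \leq \frac{L^2}{2}\int_0^t e^{-2(t-s)}\E\|u_1(s)-u_2(s)\|_H^2 \, ds,
\]
while Lemma \ref{lem2} with the Lipschitz assumption on $\sigma$ yields
\[
\E\Big\|\int_0^t S(t-s)(\sigma(u_1)-\sigma(u_2))\,dW(s)\Big\|_H^2 \leq aL^2 \int_0^t e^{-4(t-s)}\E\|u_1(s)-u_2(s)\|_H^2\, ds.
\]
Taking the supremum over $t\in[0,T_0]$ yields
\[
\|\Phi u_1 - \Phi u_2\|_{X_{T_0}}^2 \leq C(L,a)\cdot T_0 \cdot \|u_1-u_2\|_{X_{T_0}}^2,
\]
so for $T_0$ sufficiently small, $\Phi$ is a strict contraction and the Banach fixed-point theorem produces a unique mild solution on $[0,T_0]$.

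Finally, because the smallness condition on $T_0$ depends only on $L$ and $a$ (not on $u_0$), I can iterate the construction on the intervals $[kT_0,(k+1)T_0]$ using the value $u(kT_0,\cdot)\in L^2(\Omega;H)$ as the new initial data and concatenating the resulting processes; this produces a unique mild solution on $[0,\infty)$. The main obstacle I anticipate is bookkeeping the constants in the weighted space $H = L^2_\rho(G)$ — in particular, verifying that $f(\cdot,0),\sigma(\cdot,0)\in L^\infty(G)$ translates into membership in $H$ (which uses $\rho\in L^1(G)$) so that the driving terms are genuinely $H$-valued, and ensuring the Itô isometry computation in Lemma \ref{lem2} is compatible with the predictable integrand $\sigma(u(s,\cdot))$ arising from the Picard iterates.
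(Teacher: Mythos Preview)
Your proposal is correct and follows essentially the same route as the paper: a Banach fixed-point argument in the space $X_T$ (the paper's $B$) using Lemma~\ref{lem1} for the semigroup contraction and Lemma~\ref{lem2} for the stochastic convolution, yielding a contraction constant of order $L^2 T(1+a)$ and then concatenating on intervals of length $T_0$ to reach $[0,\infty)$. The paper's estimates and yours differ only in minor bookkeeping of constants, and your anticipated obstacle (translating $f(\cdot,0),\sigma(\cdot,0)\in L^\infty(G)$ into $H$-membership via $\rho\in L^1(G)$) is exactly what the paper uses implicitly when writing $\E\|f(u)\|_H^2 \leq C(1+\|u\|_B^2)$.
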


\begin{proof}
Write the integral relation (\ref{eqn1semi}) as
\begin{equation}\label{fix}
u(t,x) = \Psi[u(t,x)]
\end{equation}
where
\[
\Psi[v(t,x)] := S(t) u_0(x) + \int_{0}^{t} S(t-s) f(v(s)) ds + \sum_{k=1}^{\infty} \sqrt{a_k} \int_{0}^{t}S(t-s) \sigma(v(s)) e_k d \beta_k(s)
\]
For $T>0$, let
\[
B:=\{v \in H  \text{ is } \mathcal{F}_{t}
\text{ measurable for $\forall t \in [0, T]$},\,\,\sup_{t \in [0,T]} \E \|v(t)\|^2_{H}< \infty \}
\]
and
\[
\|v\|_{B}^2 := \sup_{t \in [0,T]} \E\|v(t)\|^2_{H}
\]
We will establish the contraction property of $\Psi$:
for $T$ sufficiently small, it holds that
\begin{itemize}
\item[ (i) ] $\Psi: B \to B$;\\
\item[(ii)] $\|\Psi(v_1) - \Psi(v_2)\|_{B} \leq \gamma \|v_1-v_2\|_{B} \text{ for some } 0< \gamma<1$
\end{itemize}

To show (i), for $u \in B$ and $t \in [0,T]$, we have:
\[
\|\Psi [u] \|_{B} = \sup_{t \in [0,T]}\|\Psi[u]\|^2_{H} \leq 3\left(\sup_{t \in [0,T]}\|S(t)u_0(x)\|^2_{H} + \sup_{t \in [0,T]} I_1 + \sup_{t \in [0,T]} I_2\right)
\]
where
\[
I_1:= \E \left\|\int_{0}^{t} S(t-s) f(u(s)) ds \right\|^2_H
\,\,\,\text{and}\,\,\,
I_2:= \E \left\|\sum_{k=1}^{\infty} \sqrt{a_k} \int_{0}^{t}S(t-s) \sigma(u(s)) e_k d \beta_k(s)\right\|^2_{H}
\]
First, note that by Lemma \ref{lem1},
\[
\sup_{t \in [0,T]} \|S(t) u_0(x)\|^2_{H} \leq \sup_{t \in [0,T]} e^{-4t} \|u_0\|^2_{H} < \infty
\]
We next proceed with estimating $I_1$ and $I_2$.
\[
I_1 \leq \E \left(\int_{0}^{t}\left\|S(t-s)f(u(s))\right\|_{H} ds\right)^2 \leq \E \left(\int_{0}^{t} e^{-2(t-s)} \|f(u(s))\|_{H} ds\right)^2
\]
\[
\leq \int_{0}^{t} e^{-2(t-s)} \int_{0}^{t} e^{-2(t-s)} \E \|f(u(s))\|_{H} ds  \leq C (1 + \|u\|^2_{B}).
\]
Hence
\[
\sup_{t \in [0,T]} I_1 \leq C(1 + \|u\|^2_{B}) < \infty
\]
Similarly, using  (\ref{basis})
\[
I_2 = \int_{0}^{t} \sum_{k=1}^{\infty} a_k \E \|S(t-s) \sigma(u(s,x)) e_k(x) \|_{H}^2 ds
\]
\[
\leq \sum_{k=1}^{\infty} a_k \int_{0}^{t} e^{-4(t-s)} \E \| \sigma(u(s)) e_k(x) \|_{H}^2 ds
\]
\[
\leq \sum_{k=1}^{\infty} a_k \int_{0}^{t} e^{-4(t-s)} \E \| \sigma(u(s)) \|_{H}^2 ds \leq C (1 + \|u\|^2_{B})
\]
Thus (i) follows.

To show (ii), let $u_1$ and $u_2$ be arbitrary elements in $H$. For $t \in [0,T]$,
we have
\[
\E \|\Psi(u_1) - \Psi(u_2)\|^2_{H} \leq 2(J_1 + J_2)
\]
where
\[
J_1 := \E \left\|\int_{0}^{t} S(t-s)(f(u_1(s)) - f(u_2(s))) ds\right\|_H^2
\]
and
\[
J_2 :=  \E \left\| \sum_{k=1}^{\infty} \sqrt{a_k} \int_{0}^{t} S(t-s) (\sigma(u_1(s)) - \sigma(u_2(s))) e_k d \beta_k(s)\right\|^2_H
\]
For $t \in [0,T]$, we have
\[
J_1 \leq \E \left(\int_{0}^{t} \| S(t-s)(f(u_1(s)) - f(u_2(s)))\|_{H} ds\right)^2
\]
\[
\leq \E \left(\int_{0}^{t} e^{-2(t-s)} \|f(u_1(s)) - f(u_2(s))\|_{H} ds\right)^2\]
\[
\leq \int_{0}^{t} e^{-2(t-s)} ds \int_{0}^{t} e^{-2(t-s)}  \E\|f(u_1(s)) - f(u_2(s))\|^2_{H} ds  \leq  \frac{L^2 T}{2} \|u_1 - u_2\|^2_{B}
\]
Similarly,
\[
J_2 \leq L^2  \sum_{k=1}^{\infty} a_k \int_{0}^{t}  e^{-4(t-s)} \E\|(u_1(s) - u_2(s)) e_k\|^2_{H} ds
\leq \frac{L^2 T}{4}   \sum_{k=1}^{\infty} a_k   \|u_1 - u_2\|^2_{B}
\]
Thus, we have
\[
\|\Psi(u_1) - \Psi(u_2)\|_{B}^2 \leq \gamma \|u_1 - u_2\|^2_{B}
\]
where
\[
\gamma =\frac{L^2 T}{2}\left(1 + \frac{1}{2}   \sum_{k=1}^{\infty} a_k\right) < 1
\]
for sufficiently small $T>0$. Therefore, $\Psi$ is a contraction
which implies a unique fixed point for the operator $\Phi$ leading
to a mild solution of (\ref{eqn1}) on $[0,T]$.

Repeating the above procedure for the intervals $[T,2T]$, $[2T,3T]$
$\ldots$, we get the existence result on $[0, \infty)$.
\end{proof}

Next, we will construct and analyze solutions of
(\ref{eqn1semi}) defined {\em for all $t \in \R$}. First we introduce
the following definition.
\begin{definition}\label{defMild}
We say that an $H$-valued process $u(t)$ is a mild solution of (\ref{eqn1semi}) on $\R^1$ if
\begin{enumerate}
\item for $\forall t \in \R$, $u(t)$ is $\mathcal{F}_t$-measurable;
\item $u(t)$ is continuous almost surely in $t \in \R$ with respect to $H$-norm;
\item $\forall t \in \R$, $\E\|u(t)\|_H^2 < \infty$
\item for all $-\infty < t_0 < t < \infty$ with probability 1 we have
\begin{equation}\label{mildall}
u(t) = S(t-t_0)u(t_0) + \int_{t_0}^{t} S(t-s) f(u(s)) ds + \int_{t_0}^{t} S(t-s) \sigma(u(s)) dW(s)
\end{equation}
\end{enumerate}
\end{definition}

The proof of Theorem 3 is divided into its linear
and nonlinear versions.

\subsection{Proof of Theorem 3 -- Linear Version.}
Let $\mathcal{B}$ be the class of $H$-valued, $\mathcal{F}_t$-measurable random processes $\xi(t)$ defined on $\R^1$ such that
\begin{equation}
\sup_{t \in \R^1} E \|\xi(t)\|^2_{H} < \infty
\end{equation}
For $\f(t)$ and $\alpha(t)$ in $\mathcal{B}$ consider
\begin{equation}\label{lin}
du = (A u + \alpha(t)) dt + \f(t) d W(t)
\end{equation}
\begin{definition}\label{defExpStab}
A solution $u^*$ is exponentially stable in mean square
if there exist $K>0$ and $\gamma>0$ such that for any $t_0$ and any
other solution $\eta(t)$, with $\mathcal{F}_{t_0}$ measurable $\eta(t_0)$ and $E\|\eta(t_0)\|_{H}^2 < \infty$, we have
\[
\E \|u^*(t) - \eta(t)\|_H^2 \leq K e^{-\gamma(t-t_0)} \E \|u^{*}(t_0) - \eta(t_0) \|^2_{H}
\]
for $t \geq t_0$.
\end{definition}
\begin{thm}\label{thm4lin}
The equation (\ref{lin}) has a unique solution $u^{*}$ in the sense of the Definition \ref{defMild}. This solution is in $\mathcal{B}$ and is exponentially stable in the sense of Definition \ref{defExpStab}.
\end{thm}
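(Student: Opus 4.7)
The plan is to construct $u^*$ by a Duhamel-type formula that integrates the noise and drift all the way from $-\infty$, using the exponential contraction of $S(t)$ (Lemma \ref{lem1}) to ensure convergence. Specifically, I would set
\[
u^*(t) := \int_{-\infty}^{t} S(t-s)\alpha(s)\,ds + \int_{-\infty}^{t} S(t-s)\varphi(s)\,dW(s),
\]
where the stochastic integral is defined as the limit in $L^2(\Omega;H)$ of $\int_{-n}^{t} S(t-s)\varphi(s)\,dW(s)$ as $n\to\infty$. The first task is to verify that these integrals converge. For the deterministic piece, the estimate $\|S(t-s)\alpha(s)\|_H \leq e^{-2(t-s)}\|\alpha(s)\|_H$ combined with $\alpha \in \mathcal{B}$ yields an absolutely convergent Bochner integral and the bound $\E\|\int_{-\infty}^t S(t-s)\alpha(s)\,ds\|_H^2 \leq \frac{1}{4}\sup_s \E\|\alpha(s)\|_H^2$. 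For the stochastic piece, Lemma \ref{lem2} applied on $[-n,t]$ gives
\[
\E \Big\|\int_{-n}^{t} S(t-s)\varphi(s)\,dW(s)\Big\|_H^2 \leq \sum_{k=1}^\infty a_k \int_{-n}^{t} e^{-4(t-s)} \E\|\varphi(s)\|_H^2\,ds,
\]
which is bounded uniformly in $n$ and converges as $n\to\infty$, and a Cauchy estimate of the same form shows the sequence is Cauchy in $L^2(\Omega;H)$. Together, these give $u^* \in \mathcal{B}$.

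Next I would verify that $u^*$ is a mild solution in the sense of Definition \ref{defMild}. The measurability and almost sure continuity in $t$ follow from standard properties of stochastic convolutions with an analytic semigroup. The cocycle identity \eqref{mildall} is the main algebraic check: splitting $\int_{-\infty}^t = \int_{-\infty}^{t_0} + \int_{t_0}^{t}$ in both integrals and using the semigroup property $S(t-s) = S(t-t_0)S(t_0-s)$ for $s \leq t_0 \leq t$ together with the fact that $S(t-t_0)$ commutes with the stochastic integral (as a deterministic bounded operator), one obtains precisely
\[
u^*(t) = S(t-t_0)u^*(t_0) + \int_{t_0}^{t} S(t-s)\alpha(s)\,ds + \int_{t_0}^{t} S(t-s)\varphi(s)\,dW(s).
\]

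For exponential stability, if $\eta(t)$ is any other mild solution with $\E\|\eta(t_0)\|_H^2 < \infty$, then $v(t) := u^*(t) - \eta(t)$ satisfies the homogeneous mild identity $v(t) = S(t-t_0)v(t_0)$ on $[t_0,\infty)$, since the drift and noise terms cancel. Applying Lemma \ref{lem1} yields $\E\|v(t)\|_H^2 \leq e^{-4(t-t_0)} \E\|v(t_0)\|_H^2$, which is exactly Definition \ref{defExpStab} with $K=1$ and $\gamma = 4$.

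Finally, uniqueness of $u^*$ in $\mathcal{B}$ follows immediately: if $\tilde u$ is another solution in $\mathcal{B}$, exponential stability applied with $t_0 \to -\infty$ gives $\E\|u^*(t) - \tilde u(t)\|_H^2 \leq e^{-4(t-t_0)} \cdot 2\bigl(\|u^*\|_{\mathcal{B}}^2 + \|\tilde u\|_{\mathcal{B}}^2\bigr) \to 0$, so $u^* = \tilde u$ almost surely. The step I expect to require the most care is the cocycle identity, where one must justify pulling the (deterministic) operator $S(t-t_0)$ inside the $dW$-integral and controlling the limit $n \to \infty$ simultaneously with the splitting; the rest amounts to direct applications of Lemmas \ref{lem1} and \ref{lem2}.
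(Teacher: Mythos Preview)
Your proposal is correct and follows essentially the same route as the paper: the same Duhamel formula for $u^*$, the same Cauchy-sequence argument via Lemmas \ref{lem1} and \ref{lem2} for convergence and membership in $\mathcal{B}$, the same splitting at $t_0$ with the semigroup identity to verify \eqref{mildall}, and the same cancellation $u^*(t)-\eta(t)=S(t-t_0)(u^*(t_0)-\eta(t_0))$ for stability and uniqueness. The only minor additions in the paper are an explicit citation of the factorization formula for almost-sure continuity and a final line using that continuity to upgrade pointwise-in-$t$ uniqueness to $\sup_{t\in\R}\|u^*(t)-\tilde u(t)\|_H=0$ a.s.
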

\begin{proof}
Define
\begin{equation}\label{lin_explicit}
u^{*}(t):= \int_{-\infty}^{t} S(t-s)\alpha(s) ds + \int_{-\infty}^{t}S(t-s) \f(s) d W(s)
\end{equation}
We start with showing that the function given by (\ref{lin_explicit}) is well-defined in the sense that the improper integrals are convergent. Let
\begin{equation}\label{sequence_xi}
\xi_n (t) := \int_{-n}^{t} S(t-s) \alpha(s) ds
\end{equation}
\begin{equation}\label{sequence_zeta}
\zeta_n (t) := \int_{-n}^{t} S(t-s) \f(s) d W(s)
\end{equation}
For $n>m$, we have
\[
\E\|\xi_n(t) - \xi_m(t)\|_{H}^2 \leq \E\left(\int_{-n}^{-m} \|S(t-s) \a(s)\|_H ds\right)^2 \leq  \E \left( \int_{-n}^{-m} e^{-2(t-s)} \|\alpha(s)\|_H ds\right)^2
\]
\[
\leq \int_{-n}^{-m} e^{-2(t-s)} ds \cdot \int_{-n}^{-m} e^{-2(t-s)} \E \|\alpha(s)\|_H^2 ds \leq \sup_{t \in \R}\E \|\alpha(t)\|_H^2 \cdot \left(\int_{-n}^{-m} e^{-2(t-s)} ds\right)^2
\]
which can be made as small as possible as $n,m \to \infty.$
Thus for all $t \in \R$ the sequence (\ref{sequence_xi}) is a Cauchy sequence.

Similarly, using Lemma \ref{lem2}, we have
\[
\E \|\zeta_n(t) - \zeta_m(t)\|_H^2 = \E \left\|\int_{-n}^{-m} S(t-s) \f(s) d W(s)\right\|_H^2 \leq  \sum_{k=1}^{\infty} a_k \int_{-n}^{-m} e^{-2(t-s)} \, ds \sup_{t \in \R} \E \|\f(t)\|_H^2
\]
which is again uniformly small for all large $n$ amd $m$. Thus
$\left\{\zeta_n\right\}_n$ is also a Cauchy sequence. The above show that
the process given by (\ref{lin_explicit}) is well defined.

We will show that this process is the solution in the sense of Definition
\ref{defMild}. First, we note that $u^{*}(t)$ is $\F_t$-measurable. Furthermore, the continuity of $u^*$ in time with probability 1 follows from the factorization formula for the stochastic integrals \cite[Theorem 5.2.5]{DapZab96}. Next we show that
\begin{equation}\label{bnd_ustar}
\sup_{t \in \R} \E \|u^{*}(t)\|_{H}^2 < \infty
\end{equation}
From (\ref{lin_explicit}), we have
\[
\E \left\|\int_{-\infty}^{t} S(t-s) \a(s) ds\right\|_H^2 \leq \E \left(\int_{-\infty}^{t} \|S(t-s)\a(s)\|_H ds \right)^2
\]
\[
\leq \int_{-\infty}^{t} e^{-2(t-s)} ds \int_{-\infty}^{t} e^{-2(t-s)} ds \sup_{t \in \R} \E\|\a(t)\|_H^2 = \frac{1}{4}\sup_{t \in \R} \E\|\a(t)\|_H^2
\]
as well as
\[
\E \left\|\int_{-\infty}^{t} S(t-s) \f(s) d W(s)\right\|_H^2 \leq  \sum_{k=1}^{\infty} a_k \int_{-\infty}^{t} e^{-4(t-s)} \sup_{t \in \R} \E \|\f(t)\|_H^2 < \infty.
\]
Thus (\ref{bnd_ustar}) holds.

Finally, since
\[
u^{*}(t_0) = \int_{-\infty}^{t_0} S(t_0-s) \a(s) ds + \int_{-\infty}^{t_0} S(t_0-s) \f(s) dW(s)
\]
we compute:
\[
u^{*}(t) = \int_{-\infty}^{t} S(t-s) \a(s) ds + \int_{-\infty}^{t} S(t-s) \f(s) dW(s)
\]
\[
=
\int_{-\infty}^{t_0} S(t-s) \a(s) ds +
\int_{-\infty}^{t_0} S(t-s) \f(s) dW(s)
\]
\[
+ \int_{t_0}^t S(t-s) \a(s) ds +
\int_{t_0}^t S(t-s) \f(s) dW(s)
\]
\[
=
\int_{-\infty}^{t_0} S(t-t_0)S(t_0-s) \a(s) ds +
\int_{-\infty}^{t_0} S(t-t_0)S(t_0-s) \f(s) dW(s)
\]
\[
+ \int_{t_0}^t S(t-s) \a(s) ds +
\int_{t_0}^t S(t-s) \f(s) dW(s)
\]
\[
= S(t-t_0) u^{*}(t_0) + \int_{t_0}^{t} S(t-s) \a(s) ds + \int_{t_0}^{t} S(t-s) \f(s) dW(s).
\]
Hence $u^{*}$ is a solution in the sense of Definition \ref{defMild}.

To show the exponential stability of $u^{*}$ (in the sense of
Definition \ref{defExpStab}),
let $\eta(t)$ be another solution of (\ref{lin}), such that $\E \|\eta(t_0)\|_H^2 < \infty$. Then
\[
\eta(t) = S(t-t_0) \eta(t_0) + \int_{t_0}^{t}S(t-s) \a(s) ds + \int_{t_0}^{t} S(t-s) \f(s) d W(s),
\]
and thus
\[
\E \|u^{*}(t) - \eta(t)\|_H^2
=
\E \|S(t-t_0)(u^{*}(t_0) - \eta(t_0))\|_H^2
\leq e^{-4(t-t_0)}\E\|u^*(t_0) - \eta(t_0)\|^2_H
\]
which implies the stability of $u^*$.

Finally, we show the uniqueness of $u^{*}$. Let $u_0$ be another solution, such that
\[
\sup_{t \in \R} \E \|u_0(t)\|_{H}^2 < \infty
\]
Then $z(t) = u^{*}(t) - u_0(t)$ satisfies
\[
\E\|z(t)\|^2 \leq e^{-4(t-\tau)} \E\|z(\tau)\|_H^2
\]
for arbitrary $\tau \leq t$. Clearly, $\sup_{t \in \R} \E\|z(t)\|^2_{H} \leq e^{-4(t-\tau)} C$ for some $C>0$. Letting $\tau \to - \infty$, we have $\E \|z(t)\|^2 = 0$ for all $t \in \R$. Therefore,
\[
\P\left(u_0(t) \neq u^{*}(t)\right) = 0, \ \forall t \in \R.
\]
Since the processes $u_0$ and $u^*$ are continuous in time with probability 1, then
\[
\P\left(\sup_{t \in \R}\|u_0(t) - u^*(t)\|_H>0\right) = 0.
\]
\end{proof}

Now we are ready to prove Theorem \ref{thm5nonlin}.

\subsection{Proof of Theorem \ref{thm5nonlin} - Nonlinear Version}
 \begin{proof}
Suppose the constant $L$ in (\ref{Lip}) satisfies
\begin{equation}\label{smallness}
L^2 + L^2 \sum_{k=1}^{\infty} a_k < 1
\end{equation}
\begin{equation}\label{small2}
\frac{L^2}{2} + L^2 \sum_{k=1}^{\infty} a_k < \frac{2}{3}.
\end{equation}
The idea of the proof is to construct a sequence of approximations which converges to the solution $u^*(t,x)$. Let $u_0 \equiv 0$. For $n \geq 0$, define $u_{n+1}(t,x)$ as
\begin{equation}\label{iter}
d u_{n+1} = (A u_{n+1} + f(x,u_n))\, dt + \sigma(x,u_n) dW(t)
\end{equation}
The equation (\ref{iter}) satisfies the conditions of the Theorem \ref{thm4lin}, since
\[
\sup_{t \in \R} \E \|f(x,u_n(t,x))\|^2_H \leq C  \sup_{t \in \R} \E \int_{G}(1+|u_n(t,x)|^2) e^{-|x|^2} \, dx < \infty.
\]
for some $C>0$. The bound for $\sigma(x, u_n)$ is obtained analogously. Therefore, by Theorem \ref{thm4lin}, we can find the unique $u_{n+1}(t,x)$ satisfying
\[
\sup_{t \in \R} \E \|u_{n+1}\|^2_H < \infty.
\]

First, we show that $\sup_{t \in \R} \E \|u_n\|^2_H$ has a bound which is {\it independent of $n$}. To this end, $u_{n+1}$ has the presentation
\[
u_{n+1}(t) = \int_{-\infty}^{t} S(t-s) f(u_n(s)) ds + \int_{-\infty}^{t}S(t-s)\sigma(u_n(s)) dW(s):= I_1 + I_2
\]
thus
\[
E \|u_{n+1}(t)\|_H^2 \leq 2 \E \|I_1\|_{H}^2 + 2 E \|I_2\|_{H}^2.
\]
We now estimate each term separately:
\[
 \E \|I_1\|_{H}^2 = \E \left\|\int_{-\infty}^{t}S(t-s) f(u_n(s)) ds \right\|_H^2\]
\[
\leq 2 \, \E \left\|\int_{-\infty}^{t}S(t-s) f(0) ds \right\|_H^2 +
2 \, \E \left\|\int_{-\infty}^{t}S(t-s) [f(u_n(s)) - f(0)] ds \right\|_H^2
\]
\[ \leq C_0 \int_{-\infty}^{t} e^{-2(t-s)} ds  + L^2 \E  \int_{-\infty}^{t} e^{-2(t-s)} \|u_n(s)\|_H^2 ds
\leq C_0 + \frac{L^2}{2}\sup_{t \in \R} \E \|u_n(t)\|_H^2
\]
Applying Lemma \ref{lem2}, we proceed with a similar estimate for $I_2$:
\[
\E \|I_2\|_{H}^2 =  \E \left\| \int_{-\infty}^{t}S(t-s)\sigma(u_n(s)) dW(s)\right\|_H^2
\]
\[
\leq 2 \, \E \left\|\int_{-\infty}^{t} S(t-s)\sigma(0) dW(s)\right\|_H^2
+ 2 \, \E \left\|\int_{-\infty}^{t} S(t-s)[\sigma(u_n(s)) - \sigma(0)] dW(s)\right\|_H^2
\]
\[
\leq C_1 + 2  \sum_{k=1}^{\infty} a_k \int_{-\infty}^{t} e^{-4(t-s)} \E \|u_n(s)\|^2_H \, ds
\leq C_1 + \sum_{k=1}^{\infty} a_k \frac{L^2}{2} \sup_{t \in \R} \E \|u_n(t)\|_H^2
\]
so that we have
\[
\sup_{t \in \R} E \|u_{n+1}(t)\|_H^2 \leq C_2 + L^2(1 +  \sum_{k=1}^{\infty} a_k)\sup_{t \in \R} \E \|u_n(t)\|_H^2
\]
where $C_2 = 2 C_0 + 2 C_1$ does not depend on $L$. Hence, if $L^2(1 +  \sum_{k=1}^{\infty} a_k) < 1$ (condition (\ref{smallness})), we have a bound for $\sup_{t \in \R} E \|u_{n}(t)\|_H^2 $ which is independent of $n$:
\begin{equation}\label{unif_bound}
\sup_{t \in \R} E \|u_{n}(t)\|_H^2 \leq \frac{C_2}{1 - L^2(1 +  \sum_{k=1}^{\infty} a_k)}
\end{equation}
The bound (\ref{unif_bound}) follows from the fact that if a nonnegative numerical sequence $\{x_n, n\geq 1\}$ satisfies
\[
x_{n+1} \leq a + b x_n
\]
with $b<1$, then $x_n \leq \frac{a}{1-b}$.

Second, we establish that $u_n$ is convergent.
\begin{multline*}
u_{n+1}(t) - u_{n}(t)=  \int_{-\infty}^{t} S(t-s) [f(u_n(s)) - f(u_{n-1}(s))] ds + \\ + \int_{-\infty}^{t}S(t-s)[\sigma(u_n(s)) - \sigma(u_{n-1}(s))] dW(s):=
 J_1 + J_2.
\end{multline*}
Thus
\[
\E \|u_{n+1}(t) - u_{n}(t)\|_H^2 \leq 2 \E \, \|J_1\|^2_{H} + 2 \E \, \|J_2\|^2_H.
\]
Estimating the first term, we have
\[
\|J_1\|^2_{H} = \E \left\|\int_{-\infty}^{t}S(t-s) [f(u_n(s)) - f(u_{n-1}(s))] ds \right\|_H^2
\]
\[
\leq \frac{L^2}{2}  \int_{-\infty}^{t} e^{-2(t-s)} \E \|u_n(s) - u_{n-1}(s)\|_H^2 ds
\leq \frac{L^2}{4} \sup_{t \in \R} \E\|u_n(t) - u_{n-1}(t)\|_H^2.
\]
Using Lemma \ref{lem2} again, we have
\[
\|J_2\|^2_{H} \leq L^2 \sum_{k=1}^{\infty} a_k \int_{-\infty}^{t} e^{-4(t-s)}\E \|u_n(s) - u_{n-1}(s)\|_H^2 ds
\leq \frac{L^2}{4} \sum_{k=1}^{\infty} a_k \sup_{t \in \R} \E\|u_n(t) - u_{n-1}(t)\|_H^2.
\]
Therefore,
\begin{equation}\label{iter_diff}
\sup_{t \in \R} \E \|u_{n+1}(t) - u_{n}(t)\|_H^2 \leq \frac{L^2}{2} \left(1 +  \sum_{k=1}^{\infty} a_k  \right) \sup_{t \in \R} \E\|u_n(t) - u_{n-1}(t)\|_H^2.
\end{equation}
where, due to (\ref{smallness}),
\[
\frac{L^2}{2} \left(1 + \sum_{k=1}^{\infty} a_k\right) < \frac{1}{2}.
\]
Iterating (\ref{iter_diff}), we get
\[
\sup_{t \in \R} \E \|u_{n+1}(t) - u_{n}(t)\|_H^2 \leq \frac{C}{2^n}
\]
for some positive constant $C$. Therefore, $\forall n, m \geq 1$
\[
\sup_{t \in \R} \sqrt{\E \|u_n(t) - u_{m}(t)\|^2_{H}} = \sup_{t \in \R} \sqrt{\E \left\|\sum_{i=m}^{n}(u_{i+1}(t) - u_{i}(t))\right\|^2_{H}}
\]
\[
\leq \sum_{i=m}^{n}\sqrt{\sup_{t \in \R} \E \|u_{i+1}(t) - u_{i}(t)\|^2_{H}} \to 0, \,\,\text{as}\,\, n,m \to \infty,
\]
and thus $u_n(t)$ is a Cauchy sequence. Consequently, there is a limiting function $u^{*}(t, \cdot) \in H$ such that
\[
\sup_{t \in \R} \E \|u_n(t) - u^{*}(t)\|^2_H \to 0, \ n \to \infty.
\]
Using (\ref{unif_bound}), it follows from Fatou's Lemma that
\[
\sup_{t \in \R} \E \|u^{*}\|^2_{H}  \leq \frac{C_2}{1 - L^2(1 + \sum_{k=1}^{\infty} a_k)}
\]
The function $u^{*}(t)$ is $\F_{t}$-measurable as a limit of $\F_t$-measurable processes.

Third, we show that $u^{*}$ solves the equation (\ref{eqn1semi}). To this end, we need to pass to the limit in the identity
\begin{equation}\label{identity}
u_{n+1}(t) = S(t-t_0) u_{n+1}(t) + \int_{t_0}^{t} S(t-t_0) f(u_n(s)) ds + \int_{t_0}^{t} S(t-s)\sigma(u_n(s)) dW(s)
\end{equation}
Using Markov's inequality, $\forall \ve > 0$
\[
\sup_{t \in \R} \P\{\|u_n(t) - u^{*}(t)\|_{H} > \ve\} \leq \frac{\sup_{t \in \R} \E\|u_n(t) - u^{*}(t)\|^2}{\ve^2} \to 0, \ n \to \infty.
\]
So $u_n(t) \to u^{*}(t)$, $n \to \infty$ in probability, uniformly in $t$. Thus, since $S(t-t_0)$ is a bounded operator,
\[
S(t-t_0) u_{n+1}(t) \to S(t-t_0) u^{*}(t), \ n \to \infty.
\]
Next, $\forall \ve > 0$
\[
\P\left\{\left\|\int_{t_0}^{t} S(t-s)[f(u_n(s)) - f(u^{*}(s))] ds\right\|_{H} > \ve \right\}
\]
\[
\leq \P\left\{\int_{t_0}^{t}\left\|S(t-s)[f(u_n(s)) - f(u^{*}(s))]\right\|_{H} ds > \ve \right\}
\]
\[
\leq \P\left\{ L \int_{t_0}^{t} e^{-2(t-s)} \left\|u_n(s) - u^{*}(s)\right\|_{H} ds > \ve \right\} \leq \frac{L}{\ve^2} \sup_{t \in \R} \sqrt{\E (u_n(t) - u^{*}(t))} \to 0, n \to \infty.
\]
So
\[
\int_{t_0}^{t} S(t-s) f(u_n(s))\, ds  \to \int_{t_0}^{t} S(t-s) f(u^*(s)) \, ds
\]
in probability pointwise for every $t \in \R$ as $n \to \infty$. Finally, using Lemma \ref{lem2},
\[
\E \left\|\int_{t_0}^{t} S(t-s)[\sigma(u_n(s)) - \sigma(u^{*}(s))] \, d W(s)\right\|_H^2 \leq \int_{t_0}^{t} \E \left\|S(t-s)[\sigma(u_n(s)) - \sigma(u^{*}(s))]\right\|^2_{H} \, ds
\]
\[
\leq L^2 \sum_{k=1}^{\infty} a_k \int_{t_0}^{t} e^{-4(t-s)} \E \|u_n(s) - u^{*}(s)\|_H^2 \, ds
\leq \frac{L^2}{4} \sum_{k=1}^{\infty} a_k \sup_{t \in \R} \E\|u_n(t) - u^{*}(t)\|_H^2 \to 0, n \to \infty.
\]
It follows from Proposition 4.16 \cite{DapZab92} that
\[
\int_{t_0}^{t} S(t-s)\sigma(u_n(s)) d W(s) \to \int_{t_0}^{t} S(t-s)\sigma(u^*(s)) d W(s)
\]
in probability. Therefore, passing to the limit in (\ref{identity}), we have
\begin{equation}\label{identity1}
u^*(t) = S(t-t_0) u^*(t_0) + \int_{t_0}^{t} S(t-t_0) f(u^*(s)) ds + \int_{t_0}^{t} S(t-s)\sigma(u^*(s)) dW(s)
\end{equation}
The process $u^*$, defined through the integral relation (\ref{identity1}), has continuous trajectories with probability 1. Indeed, while the continuity of the first two terms can be checked straightforwardly, the continuity of the third one is a consequence of the factorization formula \cite[Theorem 5.2.5]{DapZab96}.

We now show that $u^*$ is a stable solution. To this end, let $\eta(t)$ be another solution of (\ref{eqn1semi}) such that $\eta(t_0)$ is $\F_{t_0}$ measurable and $\E \|\eta(t_0)\|^2_H < \infty$.
We have
\begin{eqnarray}
\E \|u^{*}(t) - \eta(t)\|_{H}^2
& \leq & 3 \, \E \left\|S(t-t_0)(u^{*}(t_0) - \eta(t_0)) \right\|_H^2
\nonumber\\
& & + 3\, \E \left( \int_{t_0}^{t} \left\| S(t-s)[f(u^{*}(s)) - f(\eta(s))] \right\|_{H} \, ds \right)^2
\nonumber\\
 & & +  3\, \E
\left\|\int_{t_0}^{t} S(t-s)(\sigma(u^*(s)) - \sigma(\eta(s))) d W(s) \right\|_{H}^{2}.
\label{stab_est}
\end{eqnarray}
Estimating each term separately, we have
\[
\E \left\|S(t-t_0)(u^{*}(t_0) - \eta(t_0)) \right\|_H^2 \leq e^{-4(t-t_0)} \E \|u^{*}(t_0) - \eta(t_0)\|^2_H \leq  e^{-2(t-t_0)} \E \|u^{*}(t_0) - \eta(t_0)\|^2_H,
\]
\[
\E \left( \int_{t_0}^{t} \left\| S(t-s)[f(u^{*}(s)) - f(\eta(s))] \right\|_{H} \, ds \right)^2 \leq \frac{L^2}{2}  \int_{t_0}^{t} e^{-2(t-s)} \E\|u^{*}(s) - \eta(s)\|^2_{H} ds,
\]
and, using Lemma \ref{lem2},
\[
\E \left\|\int_{t_0}^{t} S(t-s)(\sigma(u^*(s)) - \sigma(\eta(s))) d W(s) \right\|_{H}^{2}
\]
\[
\leq L^2  \sum_{k=1}^{\infty} a_k  \int_{t_0}^{t} e^{-4(t-s)} \E\|u^{*}(s) - \eta(s)\|^2_{H} ds
\leq L^2 \sum_{k=1}^{\infty} a_k  \int_{t_0}^{t} e^{-2(t-s)} \E\|u^{*}(s) - \eta(s)\|^2_{H} ds.
\]
Thus (\ref{stab_est}) reads as
\begin{multline}\label{stab_est_imp}
\E \|u^{*}(t) - \eta(t)\|_{H}^2 \leq 3 e^{-2(t-t_0)} \E \|u^{*}(t_0) - \eta(t_0)\|^2_H \\
+ 3 \left(\frac{L^2}{2} + L^2 \sum_{k=1}^{\infty} a_k\right)   \int_{t_0}^{t} e^{-2(t-s)} \E\|u^{*}(s) - \eta(s)\|^2_{H} ds.
\end{multline}
Rewriting (\ref{stab_est_imp}) as
\begin{multline}
e^{2t} \E \|u^{*}(t) - \eta(t)\|_{H}^2 \leq 3 e^{2 t_0} \E \|u^{*}(t_0) - \eta(t_0)\|^2_H + \\
+ 3 L^2 \left(\frac{1}{2} + \sum_{k=1}^{\infty} a_k\right)   \int_{t_0}^{t} e^{2 s} \E\|u^{*}(s) - \eta(s)\|^2_{H} ds,
\end{multline}
we are now in position to apply Gronwall's inequality to conclude that
\[
\E \|u^{*}(t) - \eta(t)\|_{H}^2 \leq 3 e^{\left(-2 +3\left(\frac{L^2}{2} + L^2 \sum_{k=1}^{\infty} a_k\right)\right) (t- t_0)} \E \|u^{*}(t_0) - \eta(t_0)\|^2_H
\]
Thus, $u^{*}$ is stable, provided (\ref{small2}) holds.

The uniqueness of $u^{*}$ can be shown similarly to the linear case.
\end{proof}

\section{Uniqueness of the invariant measure.}\label{SecUniqInvMeas}
In this section we show that the solution $u^{*}(t)$ is a stationary process for $t \in \R$, which defines an invariant measure $\mu$ for
\eqref{eqn1semi}. The stability property of $u^*$ gives the uniqueness of
the invariant measure. We follow the overall procedure in
\cite[Section 11.1]{DapZab92} and \cite[Theorem 6.3.2]{DapZab96}.

Following \cite{DapZab96}, $u^*$ defines a probability transition semigroup
\[
\P_t \f(x):= \E \f(u^*(t,x)), \ x \in H
\]
so that its dual $\P^{*}_t$ is an operator in the space of probability measures $\mu$:
\[
\P^{*}_t \mu(\Gamma) = \int_{H}P_t(u_0,\Gamma) \mu(d u_0), \ t\geq 0, \ \Gamma \subset H.
\]
Here
\[
\P_t(u_0,\Gamma) = \E \chi_{\Gamma}(u(t,u_0)),
\]
and $\chi_{\Gamma}$ is the characteristic function of the set $\Gamma$.
An invariant measure $\mu$ is a fixed point of $\P^{*}_t$, i.e.
$\P^*_t\mu = \mu$ for all $t\geq 0$.

Throughout this section, $u(t,t_0,u_0)$ will denote the solution of
\begin{equation}\label{eqn1t0}
\begin{cases}
\frac{\d}{\d t} u(t,x) = A u(t,x) + f(x,u(t,x)) + \sigma(x,u(t,x)) \dot{W}(t,x),\,\,\, t \geq t_0,\,\,\,x \in G;\\
u(t_0,x) = u_0(x)
\end{cases}
\end{equation}
for $t \geq t_0$. Here, $t_0$ can be any number, in particular
we can choose $t_0=0$.
Also, for any $H$-valued random variable $X$, we use $\mathcal{L}(X)$ to
denote the law of $X$, which is the following measure on $H$:
\[
\mathcal{L}(X)(A):=P(\omega: X(\omega) \in A), A \subset H
\]

We now show that $\mu := \mathcal{L}(u^{*}(t_0))$ is the unique invariant
measure for \eqref{eqn1semi}.
Following \cite[Prop 11.2, 11.4]{DapZab92}, it is sufficient to show that
\begin{equation}\label{weakconv}
\forall u_0 \in H, \,\,\,\,\,\,
P_{t}^{*} \delta_{u_0} = \mathcal{L}(u(t,t_0,u_0)) \to \mu \text{ weakly as } t \to \infty.
\end{equation}

Since the equation is autonomous and $t - t_0 = t_0 - (2t_0 - t))$,
we have the following property of the solution
\begin{equation}\label{distributions}
\mathcal{L}(u(t,t_0,x)) = \mathcal{L}(u(t_0, 2 t_0-t, u_0)),
\,\,\,\,\,\,\text{for all}\,\,\,t > t_0.
\end{equation}
By the stability property
(Definition \ref{defExpStab}) of \eqref{eqn1semi}, we have
\begin{eqnarray*}
\E \|u(t_0, 2 t_0 - t, u_0) - u^{*}(t_0)\|^2_H
& \leq & e^{-2(t_0-2 t_0 + t)} \E \|u(2 t_0 - t, 2 t_0 - t, u_0) - u^{*}(2 t_0 - t)\|^2_H \\
& = & e^{-2(t-t_0)} \E\|u^{*}(2 t_0 - t) - u_0\|_H^2 \to 0, \ t\to \infty,
\end{eqnarray*}
since $\sup_{t \in \R} \E\|u^{*}(2 t_0 - t) - u_0\|_H^2 < \infty$. Thus $u(t_0, 2 t_0 -t, u_0)$ converges in probability to $u^{*}(t_0)$, which, in turn, implies the weak convergence (\ref{weakconv}).
The above simultaneously proves the existence and uniqueness of
the invariant measure for \eqref{eqn1semi}.

The stationarity of $u^*$ follows from \cite[Prop 11.5]{DapZab92}.


\bibliography{bibliographyinv.bib}

\begin{thebibliography}{10}

\bibitem{AssMan01}
Sigurd Assing and Ralf Manthey.
\newblock Invariant measures for stochastic heat equations with unbounded
  coefficients.
\newblock {\em Stochastic Process. Appl.}, 103(2):237--256, 2003.

\bibitem{BogRoe01}
Vladimir~I. Bogachev and Michael Rockner.
\newblock Elliptic equations for measures on infinite-dimensional spaces and
  applications.
\newblock {\em Probab. Theory Related Fields}, 120(4):445--496, 2001.

\bibitem{BrzGat99}
Z.~Brzezniak and D.~Gatarek.
\newblock Martingale solutions and invariant measures for stochastic evolution
  equations in {B}anach spaces.
\newblock {\em Stochastic Process. Appl.}, 84(2):187--225, 1999.

\bibitem{Cer01}
Sandra Cerrai.
\newblock {\em Second order {PDE}'s in finite and infinite dimension}, volume
  1762 of {\em Lecture Notes in Mathematics}.
\newblock Springer-Verlag, Berlin, 2001.

\bibitem{Cer03}
Sandra Cerrai.
\newblock Stochastic reaction-diffusion systems with multiplicative noise and
  non-{L}ipschitz reaction term.
\newblock {\em Probab. Theory Related Fields}, 125(2):271--304, 2003.

\bibitem{CerExtr}
Sandra Cerrai.
\newblock Asymptotic behavior of systems of spde's with multiplicative noise.
\newblock {\em Lecture Notes in Pure and Applied Mathematics, Stochastic
  Partial Differential Equations and Applications VII}, 245:61--75, 2006.

\bibitem{DawFle}
Dawson D.A.
\newblock Stochastic evolution equations.
\newblock {\em Mathematical Biosciences}, 154(3-4):187--316, 1972.

\bibitem{daPratoGatarekZabczyk}
G.~Da~Prato, D.~Gatarek, and J.~Zabczyk.
\newblock Invariant measures for semilinear stochastic equations.
\newblock {\em Stochastic Analysis and Applications}, 10(4):387--408, 1992.

\bibitem{daPratoZabczykBdErgodic}
G.~Da~Prato and J.~Zabczyk.
\newblock Non-explosion, boundedness, and ergodicity for stochastic semilinear
  equations.
\newblock {\em Journal of Differential Equations}, 98:181--195, 1992.

\bibitem{DapZab92}
G.~Da~Prato and J.~Zabczyk.
\newblock {\em Stochastic equations in infinite dimensions}, volume~44 of {\em
  Encyclopedia of Mathematics and its Applications}.
\newblock Cambridge University Press, Cambridge, 1992.

\bibitem{DapZab96}
G.~Da~Prato and J.~Zabczyk.
\newblock {\em Ergodicity for infinite-dimensional systems}, volume 229 of {\em
  London Mathematical Society Lecture Note Series}.
\newblock Cambridge University Press, Cambridge, 1996.

\bibitem{DirYip}
N.~Dirr and N.~K. Yip.
\newblock Pinning and de-pinning phenomena in front propagation in
  heterogeneous media.
\newblock {\em Interfaces Free Bound.}, 8(1):79--109, 2006.

\bibitem{Hairer}
J.-P. Eckmann and M.~Hairer.
\newblock Invariant measures for stochastic partial differential equations in
  unbounded domains.
\newblock {\em Nonlinearity}, 14(1):133--151, 2001.

\bibitem{GihSko}
I.~Gikhman and A.~Skorokhod.
\newblock {\em Stochastic Differential Equations}.
\newblock Springer-Verlag, 1972.

\bibitem{GolMas01}
Beniamin Goldys and Bohdan Maslowski.
\newblock Uniform exponential ergodicity of stochastic dissipative systems.
\newblock {\em Czechoslovak Math. J.}, 51(126)(4):745--762, 2001.

\bibitem{KryBog}
Nicolas Kryloff and Nicolas Bogoliouboff.
\newblock La th\'eorie g\'en\'erale de la mesure dans son application \`a
  l'\'etude des syst\`emes dynamiques de la m\'ecanique non lin\'eaire.
\newblock {\em Ann. of Math. (2)}, 38(1):65--113, 1937.

\bibitem{Man01}
Ralf Manthey.
\newblock The long-time behaviour of the solutions to semilinear stochastic
  partial differential equations on the whole space.
\newblock {\em Math. Bohem.}, 126(1):15--39, 2001.

\bibitem{Man99}
Ralf Manthey and Thomas Zausinger.
\newblock Stochastic evolution equations in {$L^{2\nu}_\rho$}.
\newblock {\em Stochastics Stochastics Rep.}, 66(1-2):37--85, 1999.

\bibitem{MasSie99}
Bohdan Maslowski and Jan Seidler.
\newblock On sequentially weakly {F}eller solutions to {SPDE}'s.
\newblock {\em Atti Accad. Naz. Lincei Cl. Sci. Fis. Mat. Natur. Rend. Lincei
  (9) Mat. Appl.}, 10(2):69--78, 1999.

\bibitem{HodHux}
H.P. McKean.
\newblock Nagumo's equation.
\newblock {\em Advances in Mathematics}, 4:209--223.

\bibitem{Mue93}
Carl Mueller.
\newblock Coupling and invariant measures for the heat equation with noise.
\newblock {\em Ann. Probab.}, 21(4):2189--2199, 1993.

\bibitem{SamPerPar12}
Parasyuk~I.O. Samoilenko~A.M., Perestyuk~M.O.
\newblock {\em Differential Equations}.
\newblock Almaty, 2012.

\bibitem{SanSar02}
M.~Sanz-Sol{\'e} and M.~Sarr{\`a}.
\newblock H\"older continuity for the stochastic heat equation with spatially
  correlated noise.
\newblock 52:259--268, 2002.

\bibitem{TesZab98}
Gianmario Tessitore and Jerzy Zabczyk.
\newblock Invariant measures for stochastic heat equations.
\newblock {\em Probab. Math. Statist.}, 18(2, Acta Univ. Wratislav. No.
  2111):271--287, 1998.

\end{thebibliography}

\end{document}